\documentclass[12pt]{amsart}
\usepackage[T1]{fontenc}
\usepackage{amssymb}
\pagestyle{plain}

\newcommand{\cf}{\operatorname{cf}}

\newcommand{\dom}{\operatorname{dom}}

\newcommand{\su}{\operatorname{c}}

\newcommand{\sat}{\operatorname{sat}}

\newcommand{\invl}{\varprojlim}

\newcommand{\Raa }{\mathcal R}
\newcommand{\Bee }{\mathcal B}
\newcommand{\Cee }{\mathcal C}

\newcommand{\Pee }{\mathcal P}

\newcommand{\Tee }{\mathcal T}

\newcommand{\w}{\operatorname{w}}

\newcommand{\cl}{\operatorname{cl}}
\renewcommand{\int}{\operatorname{Int}}

\newtheorem{theorem}{Theorem}
\newtheorem{corollary}[theorem]{Corollary}

\newtheorem{lemma}[theorem]{Lemma}

\newtheorem{proposition}[theorem]{Proposition}

\author{Andrzej Kucharski}
\address{Andrzej Kucharski \\
 Institute of Mathematics, University of
Silesia \\
 ul. Bankowa 14, 40-007 Katowice}
\email{akuchar@math.us.edu.pl}

\setlength{\parskip}{0.2cm}

\sloppy

\begin{document}

\title{On open-open games of uncountable length.} 
\subjclass[2000]{Primary: 54B35,  90D44; Secondary: 54B15, 90D05}
\keywords{Inverse system; I-favorable space; Skeletal map, open-open game}

\begin{abstract}
The aim of this note is to investigate  the open-open game of uncountable length. We introduce a cardinal number $\mu(X)$, which says how long the Player I has to play to ensure a victory. It is  proved that $\su(X)\leq\mu(X)\leq\su(X)^+$. We also introduce the class $\mathcal C_\kappa$ of topological spaces that can be represented as the inverse limit of $\kappa$-complete system $\{X_\sigma,\pi^\sigma_\rho,\Sigma\}$ with $\w(X_\sigma)\leq\kappa$ and skeletal bonding maps. It is shown that product of spaces which belong
to $\mathcal C_\kappa$ also belongs to this class and $\mu(X)\leq\kappa$ whenever $X\in\mathcal C_\kappa$ . 
\end{abstract}

\maketitle

\section{Introduction}
The following game is due to P. Daniels, K. Kunen  and H. Zhou \cite{dkz}: two
 players  take turns playing on a topological space $X$; a round consists of Player I choosing a nonempty open set  $U\subseteq X$; and Player II choosing a nonempty open set $V\subseteq U$; a round is played for each natural number. Player I wins the game if the union of  open sets which have been chosen by Player II is dense in $X$. This game is called the \textit{open-open game}.
 
In this note, we consider what happens if one drops restrictions on the length of  games.   If $\kappa$ is an infinite cardinal and 
   rounds are played for every ordinal number less then $\kappa,$ then this modification is called  \textit{the open-open game of length $\kappa$}. The examination of such games is  a continuation of \cite{kp7},  \cite{kp8} and \cite{kp9}.     
A cardinal number $\mu(X)$ is introduced such that $\su(X)\leq \mu(X)\leq\su(X)^+$. Topological spaces, which can be represented as an inverse limit of  $\kappa$-complete system $  \{ X_\sigma , 
\pi^\sigma_\varrho, \Sigma\}$ with $\w(X_\sigma )\leq\kappa$ and each $X_\sigma$ is $T_0$ space and skeletal
bonding map $\pi^\sigma_\varrho$, are listed as the class $\mathcal C_\kappa.$ If $\mu(X)=\omega,$ then $X\in C_\omega$. There exists a space $X$ with
$X\not\in C_{\mu(X)}$.  The class $\mathcal C_\kappa$ is closed under any Cartesian product. In particular, the cellularity number of $X^I$ is equal  $\kappa$ whenever $X\in\mathcal C_\kappa$.
This implies Theorem of D. Kurepa that  $\su(X^I)\leq 2^\kappa,$ whenever $\su(X)\leq\kappa$. 
  Undefined notions and symbols  are used in accordance with  books \cite{ch}, \cite{eng} and \cite{jech}. For example, if $\kappa$ is a cardinal number, then $\kappa^+$ denotes the first cardinal greater than $\kappa.$ 

\section{When  games favor Player I} 
  Let  $X$ be a topological space.   Denote by $\Tee$ the family of all non-empty open sets of $X$. For an ordinal number $\alpha$, let $\Tee^\alpha$ denotes the set of all sequences of the length $\alpha$ consisting of elements of $\Tee$.
  The space $X$ is called $\kappa$-\textit{favorable} whenever there exists    a  function
 $${\textit{\textbf s}} :\bigcup \{ (\Tee)^\alpha: \alpha<\kappa\} \to \Tee $$  such that
 for each sequence  $\{B_{\alpha+1}:\alpha<\kappa\}\subseteq\Tee$ with  $B_1
 \subseteq {\textit{\textbf s}}(\emptyset)$ and   $B_{\alpha+1} \subseteq {\textit{\textbf s}}(\left\{ B_{\gamma+1}:\gamma<\alpha\right\})$,
 for each $\alpha< \kappa$,    
  the union $\bigcup\{B_{\alpha+1}:\alpha<\kappa\}$  is dense in $X$. We may also say that the function ${\textit{\textbf s}}$ is witness to  $\kappa$-favorability of $X$. In fact, ${\textit{\textbf s}}$ is a winning strategy for Player I. For abbreviation we say that ${\textit{\textbf s}}$ is $\kappa$-winning strategy. Sometimes we do not precisely define a strategy. Just give hints  how a player should play. Note that, any winning strategy can be arbitrary on steps for  limit ordinals. 
  
  A family $\Bee$ of open non-empty subset is called a $\pi$\textit{-base} for $X$ if every non-empty open subset $U\subseteq X$ contains a member of $\Bee$. The smallest cardinal number $|\Bee|$, where $\Bee$ is a  $\pi$-base for $X,$ is denoted by $\pi(X).$

\begin{proposition}\label{prop1}  
Any topological space $X$ is $\pi(X)$-favorable.
\end{proposition}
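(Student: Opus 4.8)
The plan is to let Player~I play the dumbest conceivable strategy: ignore everything Player~II does and simply march through a fixed enumeration of a $\pi$-base of the minimal size.

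First I would fix a $\pi$-base $\Bee=\{P_\alpha:\alpha<\kappa\}$ of $X$ with $\kappa:=\pi(X)$; such a family exists by the very definition of $\pi(X)$ (enumerate it bijectively, or with repetitions in the degenerate case where $\kappa$ is finite --- this changes nothing). Then I would define
$${\textit{\textbf s}}:\bigcup\{\Tee^\alpha:\alpha<\kappa\}\to\Tee$$
by setting ${\textit{\textbf s}}(t)=P_\alpha$ whenever $t$ is a sequence of length $\alpha<\kappa$, \emph{irrespective of the entries of $t$}; in particular ${\textit{\textbf s}}(\emptyset)=P_0$. This is legitimate because $\alpha\mapsto P_\alpha$ is defined for every $\alpha<\kappa$, and the definition of $\kappa$-favorability in any case permits the strategy to behave arbitrarily at limit stages.

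Next I would verify that ${\textit{\textbf s}}$ witnesses $\kappa$-favorability. Let $\{B_{\alpha+1}:\alpha<\kappa\}\subseteq\Tee$ be any sequence with $B_1\subseteq{\textit{\textbf s}}(\emptyset)$ and $B_{\alpha+1}\subseteq{\textit{\textbf s}}(\{B_{\gamma+1}:\gamma<\alpha\})$ for every $\alpha<\kappa$; by the definition of ${\textit{\textbf s}}$ this simply says $B_{\alpha+1}\subseteq P_\alpha$ for each $\alpha<\kappa$. To see that $D:=\bigcup\{B_{\alpha+1}:\alpha<\kappa\}$ is dense, take any nonempty open $U\subseteq X$. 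Since $\Bee$ is a $\pi$-base there is some $\alpha<\kappa$ with $P_\alpha\subseteq U$, and then $\emptyset\neq B_{\alpha+1}\subseteq P_\alpha\subseteq U$, so $U\cap D\supseteq B_{\alpha+1}\neq\emptyset$. Hence $D$ is dense in $X$, ${\textit{\textbf s}}$ is a $\kappa$-winning strategy, and $X$ is $\pi(X)$-favorable.

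There is essentially no obstacle here: the only point requiring a little care is the bookkeeping --- assigning the length-$\alpha$ sequences the $\alpha$-th $\pi$-base element, so that Player~II's $\alpha$-th response is forced inside $P_\alpha$, and observing that a $\pi$-base of the \emph{minimal} cardinality $\pi(X)$ is exactly what lets the strategy exhaust all of $\Bee$ within $\pi(X)$ rounds. No separation axioms, and no feature of Player~II's choices beyond nonemptiness and the containment rule, are used.
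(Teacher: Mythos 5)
Your proof is correct and is essentially the same as the paper's: both fix an enumeration $\{U_\alpha:\alpha<\pi(X)\}$ of a minimal $\pi$-base and let Player I play $U_\alpha$ at stage $\alpha$ regardless of Player II's moves. The only cosmetic difference is that you verify density directly from the $\pi$-base property, while the paper phrases the same observation as ``the responses form another $\pi$-base.''
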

\begin{proof}
Let $\{U_\alpha:\alpha<\pi(X)\}$ be a $\pi$-base. Put ${\textit{\textbf s}}(f)=U_\alpha$ for any sequence $f\in\Tee^\alpha.$ Each family  $\left\{B_\gamma: B_\gamma\subseteq U_\gamma\mbox{ and } \gamma<\pi(X)\right\}$ of open non-empty sets is again a $\pi$-base for $X.$ So, its union is dense in $X$.
\end{proof}

According to \cite[p. 86]{eng} the cellularity of $X$ is denoted by $\su(X)$. Let $\sat(X)$ be the smallest cardinal number $\kappa$ such that every  family of pairwise disjoint open sets of $X$ has cardinality 
$<\kappa,$ compare \cite{et}. 
Clearly, if $\sat(X)$ is a limit cardinal, then $\sat(X)=\su(X). $  In all other cases, $\sat (X) = \su (X)^+$. Hence,  $\su(X)\leq\sat(X)\leq\su(X)^+$.  
 Let  
$$\mu(X)=\min\{\kappa:X \text{ is a }\kappa\text{-favorable and $\kappa$ is a cardinal number}\}.$$

Proposition \ref{prop1} implies  $\mu(X)\leq\pi(X).$ 
The next proposition gives two natural strategies and gives more accurate estimation than 
$\su(X)\leq \mu(X)\leq\su^+(X)$

 \begin{proposition}\label{muu1} $\su(X)\leq \mu(X)\leq\sat(X)$.
\end{proposition}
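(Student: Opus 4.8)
The plan is to prove the two inequalities separately, each time by exhibiting a simple ``greedy'' play.

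\emph{Upper bound $\mu(X)\le\sat(X)$.} I would write down an explicit $\sat(X)$-winning strategy for Player~I: in a partial play in which Player~II has so far chosen open sets $B_{\gamma+1}$ for $\gamma<\alpha$, let Player~I answer with $X\setminus\cl\left(\bigcup\{B_{\gamma+1}:\gamma<\alpha\}\right)$ when this set is nonempty, and with $X$ otherwise. Suppose a play consistent with this strategy has not produced a dense union after the first $\sat(X)$ rounds. Then at every round $\alpha<\sat(X)$ Player~I's move was the nonempty complement of the closure of the earlier moves, so $B_{\alpha+1}$ is disjoint from every $B_{\gamma+1}$ with $\gamma<\alpha$; hence $\alpha\mapsto B_{\alpha+1}$ injects $\sat(X)$ into a pairwise disjoint family of nonempty open sets, contradicting the definition of $\sat(X)$. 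Therefore the union becomes dense at some round $\alpha_0<\sat(X)$, and then $\bigcup\{B_{\alpha+1}:\alpha<\sat(X)\}$ is dense since it contains $\bigcup\{B_{\gamma+1}:\gamma<\alpha_0\}$. Thus $X$ is $\sat(X)$-favorable, so $\mu(X)\le\sat(X)$.

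\emph{Lower bound $\su(X)\le\mu(X)$.} It is enough to show that if $X$ is $\kappa$-favorable then every pairwise disjoint family of nonempty open sets has cardinality $\le\kappa$; applying this to $\kappa=\mu(X)$ (which is attained in the definition of $\mu$, the set of $\kappa$-favorability cardinals being nonempty by Proposition~\ref{prop1}) yields $\su(X)\le\mu(X)$. So fix a $\kappa$-winning strategy $\textit{\textbf s}$ and an arbitrary pairwise disjoint family $\mathcal A_0$ of nonempty open sets, and enlarge it to a \emph{maximal} such family $\mathcal A=\{A_i:i\in I\}$, so that $\bigcup\mathcal A$ is dense in $X$. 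Assume, for contradiction, that $|I|>\kappa$. Let Player~II respond to each open set $U$ that Player~I offers by choosing some $i\in I$ with $U\cap A_i\ne\emptyset$ (possible because $U$ is nonempty and $\bigcup\mathcal A$ is dense) and playing $U\cap A_i$. This is a legal play consistent with $\textit{\textbf s}$, and in $\kappa$ rounds it uses only the $\le\kappa$ many indices $\{i_\alpha:\alpha<\kappa\}$; since $|I|>\kappa$, some $j\in I$ is never used, so the nonempty open set $A_j$ meets no $B_{\alpha+1}$ and hence is disjoint from $\bigcup\{B_{\alpha+1}:\alpha<\kappa\}$. That union is then not dense, contradicting that $\textit{\textbf s}$ is $\kappa$-winning. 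Hence $|\mathcal A_0|\le|I|\le\kappa$, as required.

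The only points left to verify are routine: that the two described plays are legal (Player~II always moves inside Player~I's set) and consistent with the respective strategies, and that $\mu(X)$ is realized. The one genuinely needed idea is in the lower bound, namely passing first to a maximal disjoint family $\mathcal A$, which is exactly what guarantees that every open set Player~I may legally offer meets $\mathcal A$ so that Player~II's counter-move is always available; this is the step I would flag as the crux.
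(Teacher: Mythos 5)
Your proof is correct and follows essentially the same route as the paper: the greedy ``complement of the closure of what has been played so far'' strategy for $\mu(X)\le\sat(X)$, and a dodging strategy for Player~II against a too-large cellular family for $\su(X)\le\mu(X)$. The only cosmetic difference is that the paper avoids the maximality step you flag as the crux by letting Player~II choose any nonempty open subset of Player~I's set that meets \emph{at most} one $U_\xi$ (which is always possible, with no need to enlarge the family), so that step, while harmless, is dispensable.
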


\begin{proof}
Suppose $\su(X)>\mu(X) $. Fix a family $\{U_\xi:\xi<\mu(X)^+\}$ of pairwise disjoint open sets. If Player II always chooses  an  open set, which meets at most one  $U_\xi$, then he   will not lose the  open-open game of the length $ \mu(X)$, a contradiction.

Suppose sets $\{B_{\gamma+1}:\gamma<\alpha\}$ are chosen by Player II. If the set $$X\setminus\cl\bigcup\{B_{\gamma+1}:\gamma<\alpha\}$$ is non-empty, then Player I choses it.  Player I  wins  the open-open game of the length $\sat(X)$, when he will use this rule.  
 This gives  $\mu(X)\leq\sat(X).$
\end{proof}

Note that,  $\omega_0=\su(\{0,1\}^\kappa)=\mu(\{0,1\}^\kappa) \leq \sat(\{0,1\}^\kappa)=\omega_1$, where $\{0,1\}^\kappa$ is the  Cantor cube of weight $\kappa$. There exists a separable space $X$ which is not $\omega_0$-favorable, see A. Szyma\'nski \cite{szy} or \cite[p.207-208]{dkz}. Hence we get $$\omega_0=\su(X)<\mu(X)= \sat(X)=\omega_1.$$  

\section{On inverse systems with skeletal bonding maps}

Recall that,  a continuous surjection  is  \textit{skeletal} if for  any non-empty open sets $U\subseteq X$ 
the closure of  $f[U]$ has non-empty interior. If $X$ is a compact space and  $Y$ is a Hausdorff space, then a 
continuous surjection $f:X\to Y$ is skeletal if and only if $\int f[U] \not=\emptyset,$  for every non-empty 
and open $U\subseteq X$, see J. Mioduszewski and  L. Rudolf \cite{rm}.
 
\begin{lemma}\label{sk}  
 A  skeletal image of $\kappa$-favorable space is a $\kappa$-favorable space.
\end{lemma}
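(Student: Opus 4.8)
The plan is to transport a $\kappa$-winning strategy from the domain space across the skeletal map. Let $f\colon X\to Y$ be a skeletal surjection with $X$ being $\kappa$-favorable, and let $\textit{\textbf s}$ be a witness to the $\kappa$-favorability of $X$. I want to build a strategy $\textit{\textbf t}$ for Player I on $Y$. The basic idea: whenever Player I would (on $X$) play an open set $U\subseteq X$, Player I should (on $Y$) play $\int \cl f[U]$, which is a nonempty open subset of $Y$ precisely because $f$ is skeletal. Conversely, whenever Player II responds on $Y$ with an open set $V$ contained in Player I's move, I can pull it back: $f^{-1}[V]$ is a nonempty open subset of $X$ contained in (a set closely related to) the previous $X$-move, so it can be fed back into $\textit{\textbf s}$ to generate the next $X$-move.

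More precisely, I would define $\textit{\textbf t}$ recursively along a play. Suppose a partial play on $Y$ has produced Player II's moves $\{V_{\gamma+1}:\gamma<\alpha\}$. Simultaneously maintain a ``shadow'' play on $X$ with Player II's moves $B_{\gamma+1}$, chosen so that $B_{\gamma+1}\subseteq f^{-1}[V_{\gamma+1}]$ and $B_{\gamma+1}\subseteq \textit{\textbf s}(\{B_{\delta+1}:\delta<\gamma\})$. To set this up: let $\textit{\textbf t}$ of the $Y$-play be $\int\cl f[\textit{\textbf s}(\{B_{\delta+1}:\delta<\alpha\})]$; when Player II answers with $V_{\alpha+1}$ contained in this set, note $f^{-1}[V_{\alpha+1}]$ meets $\textit{\textbf s}(\{B_{\delta+1}:\delta<\alpha\})$ — here one uses that $V_{\alpha+1}\subseteq\int\cl f[W]$ forces $f^{-1}[V_{\alpha+1}]\cap W\neq\emptyset$ for $W=\textit{\textbf s}(\dots)$, since a point of $W$ maps into $\cl f[W]$ and $V_{\alpha+1}$ is an open neighborhood of some such image point — so we may pick a nonempty open $B_{\alpha+1}\subseteq f^{-1}[V_{\alpha+1}]\cap \textit{\textbf s}(\{B_{\delta+1}:\delta<\alpha\})$. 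This keeps the shadow play legal for Player I's strategy $\textit{\textbf s}$ on $X$.

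Now, since $\textit{\textbf s}$ is $\kappa$-winning, $\bigcup\{B_{\alpha+1}:\alpha<\kappa\}$ is dense in $X$. I must deduce that $\bigcup\{V_{\alpha+1}:\alpha<\kappa\}$ is dense in $Y$. Because $B_{\alpha+1}\subseteq f^{-1}[V_{\alpha+1}]$, we get $f[B_{\alpha+1}]\subseteq V_{\alpha+1}$, so it suffices to show $\bigcup_\alpha f[B_{\alpha+1}]$ is dense in $Y$. Take any nonempty open $G\subseteq Y$; then $f^{-1}[G]$ is nonempty open in $X$, hence meets some $B_{\alpha+1}$, hence $G$ meets $f[B_{\alpha+1}]\subseteq V_{\alpha+1}$. (Surjectivity of $f$ is what makes $f^{-1}[G]$ nonempty.) This completes the argument. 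One bookkeeping point: the strategy $\textit{\textbf t}$ as described depends on the auxiliary choices $B_{\gamma+1}$, not just on the sequence of $V$'s; to present it cleanly as a genuine strategy I would fix a choice function in advance (well-order all nonempty open sets of $X$ and always take the least admissible $B_{\gamma+1}$), so that the $B_{\gamma+1}$ become functions of the $V$-history alone.

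The main obstacle is the direction ``Player II's move on $Y$ pulls back to something Player II could legally have played on $X$.'' A priori $f^{-1}[V_{\alpha+1}]$ need not sit inside the previous $X$-move $B_{\alpha+1}$ would need to respect — but we do not need containment in a previous $B$; we only need $B_{\alpha+1}$ to sit inside $f^{-1}[V_{\alpha+1}]$ and inside the current $\textit{\textbf s}$-prescribed open set, and the skeletal property is exactly what guarantees these two open sets overlap. So the crux is the topological lemma that $V\subseteq\int\cl f[W]$ and $W$ open nonempty imply $f^{-1}[V]\cap W\neq\emptyset$; once that is isolated, the rest is routine transfinite recursion and a density chase. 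Limit stages require no special care since, as noted in the text, a winning strategy may behave arbitrarily at limit ordinals.
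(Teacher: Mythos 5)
Your argument is correct and complete. The paper does not actually write out a proof of this lemma---it only points to \cite[Theorem 4.1]{bjz} and \cite[Lemma 1]{kp9}---and your construction (Player I pushes his $\textit{\textbf s}$-moves forward as $\int\cl f[W]$, Player II's answers $V_{\alpha+1}$ are pulled back to $f^{-1}[V_{\alpha+1}]\cap W$, which is nonempty precisely because a nonempty open $V_{\alpha+1}\subseteq\cl f[W]$ must meet $f[W]$, and density is transferred using surjectivity and continuity of $f$) is exactly the standard argument those references contain, transplanted to games of length $\kappa$.
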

\begin{proof}
A proof follows by the same method as in  \cite[Theorem 4.1]{bjz}. In fact, repeat and generalize the proof given in \cite[Lemma 1]{kp9}.

\end{proof}

According to \cite{ch}, a directed set $\Sigma $ is said to be
\textit{$\kappa$-complete} if any chain of length $\leq \kappa$ consisting of its elements has the
least upper bound in $\Sigma$. 
 An inverse system $  \{ X_\sigma , \pi^\sigma_\varrho, \Sigma\}$ is said to be a $\kappa$-\textit{complete}, 
whenever  $\Sigma $ is 
$\kappa$-complete and for every chain $ A  \subseteq \Sigma$, where $|A|\leq \kappa$, such that 
$\sigma = \sup A \in \Sigma$ we get  $$X_{\sigma }= \varprojlim 
\{ X_{\alpha}, \pi^{\beta}_{\alpha}, A\}.$$ 
 In addition, we assume that  bonding maps are  surjections.

For $\omega$-favorability, the following lemma is  given without proof  in \cite[Corollary 1.4]{dkz}. We give a proof to convince  the reader that additional assumptions on topology  are unnecessary. 

\begin{lemma}\label{nn}
If $Y\subseteq X$  is  dense, then  $X$ is $\kappa$-favorable if and only if $Y$ is  $\kappa$-favorable. 
\end{lemma}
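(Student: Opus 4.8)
The plan is to transport a winning strategy for Player I across the dense inclusion $Y\subseteq X$ in both directions, exploiting that the trace map $U\mapsto U\cap Y$ together with a fixed section of it provides a usable dictionary between the non-empty open sets of $X$ and those of $Y$. First I will record the single topological fact that does all the work: if $G\subseteq X$ is open and $Y$ is dense in $X$, then $G$ is dense in $X$ if and only if $G\cap Y$ is dense in $Y$. This is immediate from the facts that $G\cap Y$ is dense in the open subspace $G$ — so $\cl_X G=\cl_X(G\cap Y)$ — and that $\cl_Y A=\cl_X A\cap Y$. I will also fix a section $e$ of the trace map: to each non-empty open $V\subseteq Y$ assign $e(V):=X\setminus\cl_X(Y\setminus V)$, which is open and satisfies $e(V)\cap Y=V$ and $e(V)\ne\emptyset$; thus no choice and no separation axioms are needed, in keeping with the spirit of the remark preceding the lemma.

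For the implication \emph{$Y$ is $\kappa$-favorable $\Rightarrow$ $X$ is $\kappa$-favorable} I will transport the strategy directly. Let $t$ witness $\kappa$-favorability of $Y$, and for a position $\langle D_{\delta+1}\rangle_{\delta<\alpha}$ in the game on $X$ put
\[
 s\bigl(\langle D_{\delta+1}\rangle_{\delta<\alpha}\bigr)\;:=\;e\Bigl(t\bigl(\langle D_{\delta+1}\cap Y\rangle_{\delta<\alpha}\bigr)\Bigr).
\]
Given any play $\langle D_{\gamma+1}\rangle_{\gamma<\kappa}$ consistent with $s$, intersecting each inclusion $D_{\gamma+1}\subseteq s(\langle D_{\delta+1}\rangle_{\delta<\gamma})$ with $Y$ and using $e(V)\cap Y=V$ shows that $\langle D_{\gamma+1}\cap Y\rangle_{\gamma<\kappa}$ is consistent with $t$; hence $\bigl(\bigcup_\gamma D_{\gamma+1}\bigr)\cap Y=\bigcup_\gamma(D_{\gamma+1}\cap Y)$ is dense in $Y$, so by the topological fact $\bigcup_\gamma D_{\gamma+1}$ is dense in $X$. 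Thus $s$ is a $\kappa$-winning strategy on $X$.

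The implication \emph{$X$ is $\kappa$-favorable $\Rightarrow$ $Y$ is $\kappa$-favorable} needs a little care, since the naive lift $e(B_{\gamma+1})$ of Player II's move need not be contained in the open set just prescribed by the strategy on $X$. I will build the lift recursively, trimming each lifted move by the current value of the strategy. Let $s$ witness $\kappa$-favorability of $X$. Given a position $\langle B_{\delta+1}\rangle_{\delta<\gamma}$ in the game on $Y$, define simultaneously, by recursion on $\gamma$, a lifted position in $X$ and the strategy value by
\[
 C_{\gamma+1}\;:=\;e(B_{\gamma+1})\cap s\bigl(\langle C_{\delta+1}\rangle_{\delta<\gamma}\bigr),\qquad
 t\bigl(\langle B_{\delta+1}\rangle_{\delta<\gamma}\bigr)\;:=\;s\bigl(\langle C_{\delta+1}\rangle_{\delta<\gamma}\bigr)\cap Y,
\]
with $t$ set to $Y$ on the (for our purposes immaterial) positions where this recursion would run into an empty set. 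The crux is the verification that along a play consistent with $t$ one has, at every stage, $B_{\gamma+1}\subseteq t(\langle B_{\delta+1}\rangle_{\delta<\gamma})\subseteq s(\langle C_{\delta+1}\rangle_{\delta<\gamma})$, whence $C_{\gamma+1}\cap Y=B_{\gamma+1}$, so $C_{\gamma+1}\ne\emptyset$ and $C_{\gamma+1}\subseteq s(\langle C_{\delta+1}\rangle_{\delta<\gamma})$. Therefore $\langle C_{\gamma+1}\rangle_{\gamma<\kappa}$ is a play consistent with $s$; since $s$ wins, $\bigcup_\gamma C_{\gamma+1}$ is dense in $X$, and as its trace on $Y$ is $\bigcup_\gamma B_{\gamma+1}$, the topological fact yields that $\bigcup_\gamma B_{\gamma+1}$ is dense in $Y$. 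Thus $t$ is a $\kappa$-winning strategy on $Y$.

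I expect the only real obstacle to be the bookkeeping in this second direction: checking that the recursion defining $t$ is legitimate (the value at a position of length $\gamma$ only invokes $s$ and the $C_{\delta+1}$ fixed by strictly shorter initial segments, so there is no circularity), that the lifted sequence stays coherent through limit stages, and that the equality $C_{\gamma+1}\cap Y=B_{\gamma+1}$ really does persist at every stage along a consistent play. All of this unwinds to the one density observation isolated at the start, and limit moves create no difficulty in either direction, in accordance with the earlier remark that a winning strategy may be arbitrary on limit steps.
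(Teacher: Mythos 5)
Your proof is correct and follows essentially the same route as the paper: in both directions the strategy is transported across the dense inclusion by passing between open sets of $X$ and their traces on $Y$, with the lifted move trimmed by the current strategy value exactly as in the paper's recursion $V'_{\beta+1}=V_{\beta+1}\cap\sigma_X(\{V'_{\alpha+1}:\alpha<\beta\})$. The only (harmless) cosmetic difference is your canonical section $e(V)=X\setminus\cl_X(Y\setminus V)$, which replaces the paper's implicit choice of an open $U\subseteq X$ with $U\cap Y=V$.
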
   
\begin{proof}
Let a function $\sigma_X$ be a witness to $\kappa$-favorability of $X$. Put $$\sigma_Y(\emptyset)=\sigma_X(\emptyset)
\cap Y.$$ If  Player II chooses open set $V_1\cap Y\subseteq \sigma_Y(\emptyset)$, then put  $$V'_1=V_1\cap\sigma_X(\emptyset)
\subseteq \sigma_X(\emptyset).$$ We get  $V'_1\cap Y = V_1\cap Y\subseteq \sigma_Y(\emptyset)$,  since  $V_1\cap Y\subset\sigma_X(\emptyset) \cap Y$. 
Then we put $$\sigma_Y(V_1\cap Y)=\sigma_X(V'_1)\cap Y.$$  
Suppose  we have already defined  
$$\sigma_Y(\{V_{\alpha+1}\cap Y:\alpha<\gamma\})=\sigma_X(\{V'_{\alpha+1}:\alpha<\gamma\})\cap Y ,$$
for $\gamma<\beta<\kappa.$
If Player II chooses open set $V_{\beta+1}\cap Y\subseteq \sigma_Y(\{V_{\alpha+1}\cap Y:\alpha<\beta\})$, then put  $$V'_{\beta+1}=V_{\beta+1}\cap\sigma_X(\{V'_{\alpha+1}:\alpha<\beta\})
\subseteq \sigma_X(\{V'_{\alpha+1}:\alpha<\beta\}).$$ Finally, put 
$$ \sigma_Y(\{V_{\alpha+1}\cap Y:\alpha\leq\beta\})= \sigma_X(\{V'_{\alpha+1}:\alpha\leq\beta\}) \cap Y$$ and check that $\sigma_Y$ is witness to  $\kappa$-favorability of $Y$.

Assume that $\sigma_Y$ is a witness to $\kappa$-favorability of $Y$. If $\sigma_Y(\emptyset)=U_0\cap Y$ and $U_0\subseteq X$ is open, then  put 
$\sigma_X(\emptyset)=U_0.$ 
If Player II chooses open set $V_1\subseteq \sigma_X(\emptyset)$, then 
$V_1\cap Y\subseteq \sigma_Y(\emptyset).$   Put $\sigma_X(V_1)=U_1,$ where $\sigma_Y(V_1\cap Y)=U_1\cap Y$ and $U_1\subseteq X$ is open. 
Suppose  
$$\sigma_Y(\{V_{\alpha+1}\cap Y:\alpha<\gamma\})=U_\gamma\cap Y \mbox{  and } \sigma_X(\{V_{\alpha+1}:\alpha<\gamma\})=U_\gamma$$  have been already defined for $\gamma<\beta<\kappa.$   
If II Player chooses open set $V_{\beta+1}\subseteq \sigma_X(\{V_{\alpha+1}:\alpha<\beta\})$, then put  $\sigma_X(\{V_{\alpha+1}:\alpha<\beta+1\})=U_{\beta+1},$  where open set $U_{\beta+1}\subseteq X$ X is determined by $\sigma_Y(\{V_{\alpha+1}\cap Y:\alpha<\beta+1\})=U_{\beta+1}\cap Y.$ 
\end{proof}

The next Theorem is similar to \cite[Theorem 2]{bla}. We replace a continuous inverse system 
with  indexing set being a cardinal, by $\kappa$-complete inverse system, and also $\su(X)$ is  replaced  by  $\mu(X)$. Let $\kappa$ be a fixed cardinal number. 

\begin{theorem}\label{muu}
Let $X$ be a dense subset of  the inverse limit  of the $\kappa$-complete system $\{ X_\sigma , \pi^\sigma_\varrho, \Sigma\},$ where $\kappa=\sup\{\mu(X_\sigma):\sigma\in\Sigma \}.$ If all bonding maps are skeletal, then $\mu(X)=\kappa.$
\end{theorem}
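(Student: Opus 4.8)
The plan is to prove the two inequalities $\mu(X)\leq\kappa$ and $\kappa\leq\mu(X)$ separately. For the lower bound, I would first observe that $\su(X_\sigma)\leq\mu(X_\sigma)\leq\kappa$ by Proposition~\ref{muu1}, and that a skeletal map cannot decrease cellularity (a family of pairwise disjoint open sets in $X_\sigma$ pulls back to one of the same size, using surjectivity; conversely the images of disjoint open sets under a skeletal map have disjoint interiors, so cellularity is actually preserved along the bonding maps and along the projections from the limit). Hence $\su(X)\geq\sup\{\su(X_\sigma):\sigma\in\Sigma\}$. If in fact $\kappa=\sup\{\su(X_\sigma):\sigma\}$ we are done by $\su(X)\leq\mu(X)$; otherwise $\kappa$ is a successor $\lambda^+$ with $\lambda=\sup\su(X_\sigma)=\su(X_\lambda)$ attained cofinally but $\mu(X_\sigma)=\lambda^+$ for some $\sigma$, and then I would argue directly that $X$ cannot be $\lambda$-favorable: a winning strategy of length $\lambda$ for $X$ would push forward (Lemma~\ref{sk} applied to the projection $\pi_\sigma$, which is skeletal since it is a surjection onto a dense-in-$X_\sigma$-image... here one needs $\pi_\sigma[X]$ dense in $X_\sigma$, which holds because $X$ is dense in the limit) to a $\lambda$-winning strategy on $X_\sigma$, contradicting $\mu(X_\sigma)=\lambda^+$. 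So $\mu(X)\geq\kappa$.

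For the upper bound $\mu(X)\leq\kappa$, by Lemma~\ref{nn} it suffices to show the full inverse limit $X^*=\varprojlim\{X_\sigma,\pi^\sigma_\varrho,\Sigma\}$ is $\kappa$-favorable. I would build a strategy for Player~I on $X^*$ by running, in a bookkeeping fashion over the $\kappa$ rounds, the winning strategies $\textbf{s}_\sigma$ for each coordinate $X_\sigma$ (each of length $\mu(X_\sigma)\leq\kappa$). The standard mechanism: fix an enumeration so that each $\sigma\in\Sigma$ (or rather each pair ($\sigma$, a round of the $\sigma$-subgame)) is visited cofinally often in $\kappa$; when it is $\sigma$'s turn, Player~I plays a basic open set $\pi_\sigma^{-1}[W]$ where $W$ is dictated by $\textbf{s}_\sigma$ applied to the trace, on coordinate $\sigma$, of what Player~II has done so far in the relevant subgame. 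Player~II's response $B$ then has $\pi_\sigma[B]$ open nonempty (this is exactly where skeletality of $\pi_\sigma$ enters — or rather one works with $\int\cl\pi_\sigma[B]$), feeding the $\sigma$-subgame. At the end, for each $\sigma$ the sets Player~II produced project to a dense subset of $X_\sigma$, hence Player~II's union in $X^*$ is dense because the $\pi_\sigma^{-1}[\,\cdot\,]$ form a base for $X^*$.

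I expect the main obstacle to be the $\kappa$-completeness interaction with the bookkeeping. Since $\Sigma$ need not be a cardinal and need not have size $\leq\kappa$, one cannot naively enumerate all $\sigma\in\Sigma$ in $\kappa$ steps. The fix I would pursue: a play of length $\kappa$ only ever mentions $\leq\kappa$ many coordinates, and by $\kappa$-completeness the relevant sups lie in $\Sigma$, so it suffices to handle a cofinal-enough subfamily; more precisely, I would show that it is enough to make Player~II's union dense when restricted to a single coordinate $X_{\sigma^*}$ for a cleverly chosen $\sigma^*$ — but since density in one coordinate does not give density in the limit, the honest route is to interleave: at limit stages of the construction use $\kappa$-completeness to pass to the supremum coordinate and continue the strategy there, invoking that $X_{\sup A}=\varprojlim\{X_\alpha:\alpha\in A\}$. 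Making the bookkeeping consistent with these limit-stage amalgamations — so that the strategy on $X_{\sup A}$ is genuinely compatible with the already-played strategies on the $X_\alpha$, $\alpha\in A$ — is the delicate point, and I would model it closely on the continuous-system argument of \cite[Theorem 2]{bla}, replacing "continuous chain indexed by a cardinal'' by "$\kappa$-complete directed system'' throughout.
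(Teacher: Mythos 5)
Your overall architecture matches the intended proof: the lower bound $\mu(X)\geq\kappa$ does follow by pushing a hypothetical winning strategy of length $\lambda<\kappa$ through the skeletal projection $\pi_\sigma$ (Lemmas \ref{nn} and \ref{sk}) to contradict $\mu(X_\sigma)>\lambda$, and the upper bound is indeed obtained by interleaving the coordinate strategies $\textbf{s}_\sigma$ via a pairing bijection $f\colon\kappa\to\kappa\times\kappa$, tracking only the $\kappa$ many coordinates introduced dynamically by Player II's (basic) moves. (Your case split for the lower bound is unnecessary, and your parenthetical claim that images of disjoint open sets under a skeletal map have disjoint interiors is false --- fold two disjoint copies of a space onto one --- but neither issue is load-bearing.)

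The genuine gap is at the point you yourself flag as delicate: you reject the reduction to a single coordinate on the grounds that ``density in one coordinate does not give density in the limit,'' and substitute an unexecuted plan of amalgamating strategies onto supremum coordinates at limit stages. That plan would not work as stated --- a winning strategy on $X_{\sup A}$ cannot be ``continued'' from a position that was not played according to it, so each limit stage would spawn a fresh length-$\kappa$ subgame, and the final density question would still be unresolved. The step you discarded is exactly the one that closes the argument. Write each move of Player II as $B_{\alpha+1}=\pi^{-1}_{\sigma_{\alpha+2}}(V_{\alpha+1})$, use $\kappa$-completeness (only here) to get $\sigma=\sup\{\sigma_{\xi+1}:\xi<\kappa\}\in\Sigma$ with $X_\sigma=\varprojlim\{X_{\sigma_{\xi+1}}\}$, and observe that every $B_{\alpha+1}$ is $\pi_\sigma$-saturated: $\pi^{-1}_\sigma(\pi_\sigma(B_{\alpha+1}))=B_{\alpha+1}$. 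Since $\pi_\sigma$ is skeletal, for any nonempty open $U\subseteq X$ the set $\int\cl\pi_\sigma(U)$ is nonempty; if $\bigcup_\alpha\pi_\sigma(B_{\alpha+1})$ is dense in $X_\sigma$, some open set $\pi_\sigma(B_{\alpha+1})$ meets $\cl\pi_\sigma(U)$, hence meets $\pi_\sigma(U)$, and saturation yields $B_{\alpha+1}\cap U\neq\emptyset$. So density in the single coordinate $X_\sigma$ \emph{does} give density in the limit, and that density is checked on the canonical base $(\pi^\sigma_{\sigma_{\xi+1}})^{-1}(W)$ of $X_\sigma$ using that each interleaved subgame on $X_{\sigma_{\xi+1}}$ --- fed the nonempty open sets $\int\cl\pi_{\sigma_{\xi+1}}(B_{f^{-1}((\xi,\alpha))+1})$ supplied by skeletality of the bonding maps --- follows a winning strategy. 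With this replacement your bookkeeping scheme becomes the paper's proof.
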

\begin{proof} By Lemma \ref{nn}, one can  assume that $X=\invl\{ X_\sigma , \pi^\sigma_\varrho, \Sigma\}$.
 Fix  functions ${\textit{\textbf s}}_\sigma\colon\Tee_\sigma^{<\kappa}\to \Tee_\sigma$, each one is a  witness to  $\mu(X_\sigma)$-favorability of $X_\sigma$.  This does not reduce the generality, because $\mu(X_\sigma) \leq \kappa$ for every  $\sigma\in\Sigma$. In order to explain the induction, fix a bijection $f\colon \kappa\to\kappa\times \kappa$ such that: 
\begin{enumerate}
	\item if $f(\alpha)=(\beta,\zeta)$, then $\beta,\zeta\leq \alpha$; 
	\item $f^{-1}(\beta,\gamma)< f^{-1}(\beta,\zeta)$ if and only if $\gamma<\zeta$; \item $f^{-1}(\gamma,\beta)< f^{-1}(\zeta, \beta)$ if and only if $\gamma<\zeta$.   
\end{enumerate}

One can take as $f$ an isomorphism between $\kappa$ and $\kappa\times\kappa$, with canonical well-ordering,   see \cite{jech}. The function $f$ will indicate the strategy and sets that we have taken in the following induction.

We construct a function ${\textit{\textbf s}}\colon\Tee^{<\kappa}\to \Tee$ which will provide $\kappa$-favorability of $X$. The first step is defined for $f(0)=(0,0)$.
 Take  an arbitrary  $\sigma_1 \in \Sigma$ and put $${\textit{\textbf s}}(\emptyset)=\pi^{-1}_{\sigma_1}
({\textit{\textbf s}}_{\sigma_1}(\emptyset)).$$ Assume that  Player II chooses non-empty open set $B_1=\pi^{-1}_{\sigma_2}(V_1)\subseteq 
{\textit{\textbf s}}(\emptyset),$ where $V_1 \subseteq X_{\sigma_2}$ is open. 
Let 
$${\textit{\textbf s}}(\left\{ B_1\right\})=\pi^{-1}_{\sigma_1}({\textit{\textbf s}}_{\sigma_1}(\left\{ \int \cl \pi_{\sigma_1}(B_1)\cap{\textit{\textbf s}}_{\sigma_1}(\emptyset))\right\}))$$ and denote    $D^0_0=\int \cl \pi_{\sigma_1}(B_1)\cap {\textit{\textbf s}}_{\sigma_1}(\emptyset).$ So, after the first round and the next respond of Player I, we know: indexes $\sigma_1$ and $\sigma_2$, the open set $B_1\subseteq X$ and the open set $D^0_0\subseteq X_{\sigma_1}$.

 Suppose that  sequences of open sets $\left\{ B_{\alpha+1}\subseteq X:
\alpha<\gamma\right\}$, indexes $ \left\{ \sigma_{\alpha +1}:\alpha<\gamma\right\}$ and sets $\{D^\varphi_\zeta: f^{-1}(\varphi, \zeta)< \gamma \} $  have been already defined such that:   

 if $\alpha<\gamma$ and $f(\alpha)=(\varphi,\eta)$, then  $$B_{\alpha+1}=\pi^{-1}_{\sigma_{\alpha +2}}(V_{\alpha +1})\subseteq \textit{\textbf{s}}(\left\{ B_{\xi +1} :\xi<\alpha \right\})=\pi^{-1}_{\sigma_{\varphi+1}}({\textit{\textbf s}}_{\sigma_{\varphi+1}}(\left\{ D^\varphi_\nu:\nu<\eta\right\})),$$ 
where $D^\varphi_\nu=\int\cl \pi_{\sigma_{\varphi+1}}(B_{f^{-1}((\varphi,\nu))+1})\cap {\textit{\textbf s}}_{\sigma_{\varphi+1}}(\left\{ D^\varphi_\zeta:\zeta<\nu\right\})$ and  $V_{\alpha +1}\subseteq X_{\sigma_{\alpha +2}}$ are open.

If     $f(\gamma)=(\theta,\lambda)$ and  $\beta<\lambda$, then take  $$D^\theta_\beta=\int\cl \pi_{\sigma_{\theta+1}}(B_{f^{-1}((\theta,\beta))+1})\cap{\textit{\textbf s}}_{\sigma_{\theta+1}}(\left\{ D^\theta_\zeta:\zeta<\beta\right\})$$ 
and  put $$ {\textit{\textbf s}}(\left\{ B_{\alpha +1}:\alpha<\gamma \right\})=\pi^{-1}_{\sigma_{\theta+1}}({\textit{\textbf s}}_{\sigma_{\theta+1}}
(\left\{ D^\theta_\alpha:\alpha<\lambda\right\})).$$ Since $\Sigma$ is $\kappa$-complete, one  can assume that the  
sequence $\left\{ \sigma_{\alpha+1}:\alpha<\kappa\right\}$ is increasing and 
$\sigma=\sup\{\sigma_{\xi+1}:\xi<\kappa\}\in\Sigma$. 

We shall 
prove that $\bigcup_{\alpha<\kappa}B_{\alpha +1}$ is dense 
in $X$. Since $\pi^{-1}_\sigma(\pi_\sigma(B_{\alpha +1}))=B_{\alpha +1}$ for each $\alpha<\kappa$ and 
$\pi_\sigma$ is skeletal map, it is sufficient to
show that $\bigcup_{\alpha<\kappa}\pi_\sigma(B_{\alpha +1})$ is dense in $X_\sigma$. Fix arbitrary open 
set 
$(\pi^\sigma_{\sigma_{\xi+1}})^{-1}(W)$ where $W$ is an open set of $X_{\xi+1}$. Since ${\textit{\textbf s}}_{\sigma_{\xi+1}}$ is 
winning strategy on $X_{\sigma_{\xi+1}}$, there exists $D^\xi_\alpha$ such that $D^\xi_\alpha\cap W\neq 
\emptyset$, and $D^\xi_\alpha\subseteq\int \cl\pi_{\sigma_{\xi+1}}(B_{f^{-1}((\xi,\alpha))+1}).$ 
Therefore we get 
$$(\pi^\sigma_{\sigma_{\xi+1}})^{-1}(W)\cap \pi_\sigma(B_{\delta +1})\not =\emptyset,$$
where $\delta=f^{-1}((\xi,\alpha))$.
Indeed, suppose that $(\pi^\sigma_{\sigma_{\xi+1}})^{-1}(W)\cap \pi_\sigma(B_{\delta +1})=\emptyset.$ Then 
$$\emptyset=\pi^\sigma_{\sigma_{\xi+1}}[(\pi^\sigma_{\sigma_{\xi+1}})^{-1}(W)\cap 
\pi_\sigma(B_{\delta +1})]=W\cap\pi^\sigma_{\sigma_{\xi+1}}[\pi_{\sigma}(B_{\delta +1})]=W\cap\pi_{\sigma_{\xi+1}}(B_{\delta +1}).$$

Hence we have $W\cap\int\cl\pi_{\sigma_{\xi+1}}(B_{\delta +1} )=\emptyset,$ a contradiction.
\end{proof}

\begin{corollary}
If $X$ is dense subset of  an inverse limit of $\mu(X)$-complete system $\{ X_\sigma , \pi^\sigma_\varrho, \Sigma\},$ where all bonding map are skeletal, then
$$\su(X)=\sup\{\su(X_\sigma):\sigma\in\Sigma\}.$$
\end{corollary}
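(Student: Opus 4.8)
The plan is to establish the two inequalities $\kappa\le\su(X)$ and $\su(X)\le\kappa$, where $\kappa:=\sup\{\su(X_\sigma):\sigma\in\Sigma\}$. Exactly as at the start of the proof of Theorem \ref{muu}, I would first apply Lemma \ref{nn} to reduce to $X=\invl\{X_\sigma,\pi^\sigma_\varrho,\Sigma\}$: a space and its dense subspaces have the same cellularity, and by Lemma \ref{nn} the same value of $\mu$, so all hypotheses are preserved. For $\kappa\le\su(X)$: each projection $\pi_\sigma\colon X\to X_\sigma$ is a skeletal surjection — the standard fact about $\kappa$-complete systems with skeletal bonding maps already invoked in the proof of Theorem \ref{muu} — and the preimages under a surjection of a pairwise disjoint family of nonempty open sets again form a pairwise disjoint family of nonempty open sets of the same cardinality; hence $\su(X_\sigma)\le\su(X)$ for every $\sigma$, and so $\kappa\le\su(X)$.

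For $\su(X)\le\kappa$ I would argue by contradiction, assuming $\su(X)\ge\kappa^+$. Two consequences are used. First, by Proposition \ref{muu1} we have $\mu(X)\ge\su(X)\ge\kappa^+$, so the $\mu(X)$-complete directed set $\Sigma$ is in particular $\kappa^+$-complete. Second, since $\kappa^+$ is regular, $\su(X)\ge\kappa^+$ yields a pairwise disjoint family $\{W_\xi:\xi<\kappa^+\}$ of nonempty open subsets of $X$, and, refining, we may take each $W_\xi=\pi_{\sigma_\xi}^{-1}(U_\xi)$ basic with $\emptyset\ne U_\xi\subseteq X_{\sigma_\xi}$ open. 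The crux is the claim that there is a single $\sigma^*\in\Sigma$ with $\sigma_\xi\le\sigma^*$ for $\kappa^+$-many indices $\xi$. Granting this, set $U'_\xi:=(\pi^{\sigma^*}_{\sigma_\xi})^{-1}(U_\xi)$ for those $\kappa^+$-many $\xi$: these are nonempty open subsets of $X_{\sigma^*}$, and $W_\xi=\pi_{\sigma^*}^{-1}(U'_\xi)$, so from $W_\xi\cap W_\eta=\emptyset$ and surjectivity of $\pi_{\sigma^*}$ we conclude $U'_\xi\cap U'_\eta=\emptyset$. Thus $\su(X_{\sigma^*})\ge\kappa^+>\kappa$, the desired contradiction.

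It remains to prove the claim, and here is where $\kappa^+$-completeness of $\Sigma$ is indispensable. If the claim failed, then $|\{\xi<\kappa^+:\sigma_\xi\le\sigma\}|\le\kappa$ for every $\sigma\in\Sigma$. I would then construct, by recursion on $\alpha<\kappa^+$, an increasing chain $\langle\tau_\alpha:\alpha<\kappa^+\rangle$ in $\Sigma$ together with distinct ordinals $\xi_\alpha<\kappa^+$: given $\tau_\alpha$, the set $\{\xi:\sigma_\xi\le\tau_\alpha\}$ has size $\le\kappa<\kappa^+$, so pick $\xi_\alpha$ outside it and let $\tau_{\alpha+1}$ be an upper bound of $\{\tau_\alpha,\sigma_{\xi_\alpha}\}$; at limit stages take suprema, which exist by completeness of $\Sigma$. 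The ordinals $\xi_\alpha$ are distinct since $\sigma_{\xi_\beta}\le\tau_{\beta+1}\le\tau_\alpha$ for $\beta<\alpha$ while $\sigma_{\xi_\alpha}\not\le\tau_\alpha$. Finally $\tau:=\sup_{\alpha<\kappa^+}\tau_\alpha\in\Sigma$ by $\kappa^+$-completeness, and $\sigma_{\xi_\alpha}\le\tau$ for all $\alpha<\kappa^+$, so $|\{\xi:\sigma_\xi\le\tau\}|\ge\kappa^+$, contradicting the case assumption. This finishes the proof. I expect the recursion, and the accompanying observation (through Proposition \ref{muu1}) that assuming $\su(X)>\kappa$ is precisely what makes $\Sigma$ $\kappa^+$-complete, to be the only genuinely delicate points; in the situation where $\sup\{\mu(X_\sigma):\sigma\in\Sigma\}$ already equals $\kappa$ one could instead read the result off Theorem \ref{muu} together with $\su\le\mu$, but the above is needed to cover the remaining case $\sup\{\mu(X_\sigma):\sigma\in\Sigma\}=\kappa^+$.
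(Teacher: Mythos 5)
Your proof is correct and follows the same overall strategy as the paper's: reduce to the genuine inverse limit via Lemma \ref{nn}, dispose of the easy inequality $\su(X_\sigma)\le\su(X)$, and for the converse derive a contradiction by pushing a pairwise disjoint family of $\kappa^+$ basic open sets down to a single coordinate space using the completeness of $\Sigma$. There are two worthwhile differences. First, the paper invokes Theorem \ref{muu} to pin down $\mu(X)=\su(X)=\tau^+$ exactly before extracting the disjoint family, whereas you use only Proposition \ref{muu1} to get $\mu(X)\ge\su(X)\ge\kappa^+$ and hence $\kappa^+$-completeness of $\Sigma$; your route is thus slightly more economical, since it does not depend on Theorem \ref{muu} (or on Lemma \ref{sk}, which is implicitly needed to verify the hypothesis of Theorem \ref{muu}) at all. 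Second, the paper simply asserts that $\mu(X)$-completeness together with $|\Raa|=\mu(X)$ yields a single $\beta\in\Sigma$ dominating all the indices of the basic sets in $\Raa$; your recursion producing the chain $\langle\tau_\alpha:\alpha<\kappa^+\rangle$ is exactly the argument needed to justify that step (and you correctly observe that dominating $\kappa^+$-many of the indices already suffices). Both proofs rest on the same standing assumptions, implicit throughout the paper, that the limit projections $\pi_\sigma$ are surjective and that the sets $\pi_\sigma^{-1}(U)$ form a base of the limit.
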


\begin{proof}
Let  $X=\invl\{ X_\sigma , \pi^\sigma_\varrho, \Sigma\}$. Since $\su(X) \geq \su( X_\sigma)$, for every $\sigma \in \Sigma$, we shall  show that
$$\su(X)\leq\sup\{\su(X_\sigma):\sigma\in\Sigma\}.$$  

Suppose that $\sup\{\su(X_\sigma):\sigma\in\Sigma\}=\tau<\su(X).$ 
Using  Proposition \ref{muu1} and Theorem \ref{muu}, check that
$$\mu(X)=\sup\{\mu(X_\sigma):\sigma\in\Sigma\}\leq\sup\{\su(X_\sigma)^+:\sigma\in\Sigma\}\leq \tau^+ \leq\su(X).$$ So, we get $\mu(X)=\su(X)=\tau^+$.  Therefore, there exists a family $\Raa$, of size $\tau^+$, which consists of pairwise disjoint open subset of $X$.  We can assume that $$\Raa\subseteq\{\pi^{-1}_\sigma(U):
U\text{ is an open subset of } X_\sigma \mbox{ and } \sigma\in\Sigma\}.$$ 
Since     $\{ X_\sigma , \pi^\sigma_\varrho , \Sigma\}$ is $\mu(X)$-complete inverse system and $|\Raa|=\mu(X)$, there exists $\beta\in\Sigma $ such that $$\Raa\subseteq\{\pi^{-1}_{\beta}(U):
U\text{ is an open subset of } X_{\beta}\},$$ a contradiction with $\su(X_\beta) < \tau^+$.
\end{proof}
 The above corollary is similar to \cite[Theorem 1]{bla}, but we replaced a continuous inverse system, whose indexing set is a cardinal number by $\kappa$-complete inverse system. 

\section{Classes  $\mathcal {C}_\kappa$}

Let $\kappa$ be an infinite  cardinal number.    Consider  inverse limits of  $\kappa$-complete system $  \{ X_\sigma , 
\pi^\sigma_\varrho, \Sigma\}$ with $\w(X_\sigma )\leq\kappa$. Let $\mathcal {C}_\kappa$ be a  class 
of   such inverse limits with  skeletal
bonding maps and $X_\sigma$ being  $T_0$-space.
Now, we show that the class $\mathcal{C}_\kappa$ is stable  under Cartesian  products.

\begin{theorem}\label{product}
The Cartesian product of spaces from $\mathcal{C}_\kappa$ belongs to  $\mathcal{C}_\kappa$.
\end{theorem}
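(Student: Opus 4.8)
The plan is to show that a product $\prod_{t\in T}X^{(t)}$ of spaces $X^{(t)}\in\mathcal C_\kappa$ is itself the inverse limit of a $\kappa$-complete system of $T_0$-spaces of weight $\leq\kappa$ with skeletal bonding maps. For each $t$ fix a presentation $X^{(t)}=\invl\{X^{(t)}_\sigma,\pi^{\sigma(t)}_{\varrho(t)},\Sigma_t\}$ witnessing $X^{(t)}\in\mathcal C_\kappa$. The natural indexing set for the product system is the collection $\Sigma$ of all functions $\delta$ with finite (or, more safely, $\leq\kappa$-sized) domain $S\subseteq T$ such that $\delta(t)\in\Sigma_t$ for $t\in S$; order $\Sigma$ coordinatewise (with the convention that a larger domain dominates). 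For such a $\delta$ with domain $S$ put $X_\delta=\prod_{t\in S}X^{(t)}_{\delta(t)}$, and let the bonding map from $X_{\delta'}$ to $X_\delta$ (when $\delta\leq\delta'$, so $S\subseteq S'$) be the composition of the projection onto the $S$-coordinates with the product of the maps $\pi^{\delta'(t)(t)}_{\delta(t)(t)}\colon X^{(t)}_{\delta'(t)}\to X^{(t)}_{\delta(t)}$. One checks routinely that $\invl\{X_\delta,\Sigma\}$ is homeomorphic to $\prod_{t\in T}X^{(t)}$, that each $X_\delta$ is $T_0$ of weight $\leq\kappa$ (a finite or $\kappa$-indexed product of such spaces), and that $\Sigma$ is $\kappa$-complete since each $\Sigma_t$ is and a chain of functions of bounded domain size has a coordinatewise supremum.

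The two points that need real work are: (i) the bonding maps of the new system are skeletal, and (ii) the system is $\kappa$-complete in the strong sense required, i.e. for a chain $A\subseteq\Sigma$ with $\delta=\sup A\in\Sigma$ one has $X_\delta=\invl\{X_{\delta'},A\}$. For (ii): if the chain $A$ has domains $S_a$ with union $S=\bigcup_a S_a$ (and $|S|\leq\kappa$), then for each fixed $t\in S$ the set $\{\,a : t\in S_a\,\}$ is cofinal in $A$ and the values $\delta_a(t)$ run through a chain in $\Sigma_t$ with supremum $\delta(t)$; applying the $\kappa$-completeness of the $t$-th system coordinatewise and the fact that an inverse limit of products is the product of inverse limits gives the required identification $X_\delta=\prod_{t\in S}X^{(t)}_{\delta(t)}=\invl\{X_{\delta'},A\}$. (This is exactly where the hypothesis $|S|\leq\kappa$ — equivalently, restricting $\Sigma$ to functions with domain of size $\leq\kappa$ — is used, so that each $X^{(t)}$-system only ever sees chains of length $\leq\kappa$.)

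For (i), skeletality: a basic open set of $X_{\delta'}=\prod_{t\in S'}X^{(t)}_{\delta'(t)}$ is a box $\prod_{t\in S'}U_t$ with $U_t=X^{(t)}_{\delta'(t)}$ for all but finitely many $t$. Its image under the bonding map to $X_\delta$ is $\prod_{t\in S}\pi^{\delta'(t)}_{\delta(t)}[U_t]$ (the extra coordinates in $S'\setminus S$ are simply dropped). Since each $\pi^{\delta'(t)}_{\delta(t)}$ is skeletal, $\cl\pi^{\delta'(t)}_{\delta(t)}[U_t]$ has non-empty interior, and for the cofinitely many $t$ with $U_t$ the whole space the image is the whole space; a finite product of sets with non-empty interior has non-empty interior, and the closure of a product contains the product of the closures, so $\int\cl\bigl(\prod_{t\in S}\pi^{\delta'(t)}_{\delta(t)}[U_t]\bigr)\neq\emptyset$. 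Hence the bonding map is skeletal. I expect step (i) to be the least troublesome once the bookkeeping is set up; the genuine obstacle is organising the index set $\Sigma$ and verifying (ii) — in particular making sure the restriction to $\leq\kappa$-sized domains is both necessary (to keep the coordinate systems within reach of their $\kappa$-completeness) and sufficient (to still reconstruct the full product as the inverse limit, since any open set, indeed any point, of $\prod_{t\in T}X^{(t)}$ is determined by $\kappa$-many — in fact finitely many, for open sets — coordinates). Once $\prod_{t\in T}X^{(t)}$ is exhibited as the inverse limit of this system, it belongs to $\mathcal C_\kappa$ by definition, completing the proof.
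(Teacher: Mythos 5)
Your construction is essentially the paper's own: the same index set of $\leq\kappa$-sized partial choice functions ordered coordinatewise (the paper's $\Gamma=\bigcup\{\prod_{s\in A}\Sigma_s: A\in[S]^\kappa\}$), the same product factor spaces, and the same bonding maps (a projection composed with a product of coordinate bonding maps). The only difference is one of emphasis --- you spell out skeletality of the bonding maps and the $\kappa$-completeness of the system, which the paper merely asserts, while the paper devotes most of its space to verifying that the product is homeomorphic to the inverse limit, which you leave as routine; both versions are sound.
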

\begin{proof}
Let $X=\prod\{X_s:s\in S\}$ where each $X_s \in \mathcal{C}_\kappa$. For each $s\in S$, let  $X_s=\varprojlim \{ X_{\sigma},s^{\sigma}_{\rho},\Sigma_s\}$ 
be a $\kappa$-complete inverse system with skeletal bonding map  such that each $T_0$-space  $X_\sigma$ has the weight  $\leq\kappa$. 
Consider the union  $$\Gamma=\bigcup\{\prod_{s\in A}\Sigma_s: A\in[S]^\kappa\}.$$ 
Introduce a partial order  on   $\Gamma$ as follows:  
 $$f\preceq g\Leftrightarrow \dom(f) \subseteq \dom (g)\text { and } \forall_{a\in\dom(f)} f(a)\leq_a 
g(a),$$ where $\leq_a$ is the partial order on $\Sigma_a$.  
 The set $\Gamma$  with the relation $\preceq$ is upward directed and  $\kappa$-complete. 
 
If $f\in \Gamma$, then $Y_f$ denotes   the Cartesian product $$\prod\{X_{f(a)}:a\in \dom(f)\}.$$ If $f\preceq g$, then put $$p^g_f=\left( \prod_{a\in\dom(f)} a^{g(a)}_{f(a)}\right) \circ\pi^{\dom(g)}_{\dom(f)},$$ where $\pi^{\dom(g)}_{\dom(f)}$ is the projection of $ \prod\{X_{g(a)}:a\in \dom(g)\}$ onto $ \prod\{X_{g(a)}:a\in \dom(f)\}$ and  $\prod_{a\in\dom(f)} a^{g(a)}_{f(a)}$ is the Cartesian product of the bonding maps $a^{g(a)}_{f(a)}: X_{g(a)}\to X_{f(a)}$. We get  the 
inverse system$\{Y_f,p^g_f,\Gamma\}$ which is $\kappa$-complete,  bonding maps are skeletal and 
$\w(Y_f)\leq\kappa$. So, we can take  $Y=\varprojlim\{Y_f,p^g_f,\Gamma\}$. 

Now, define a map $h: X\to Y$ by the  formula: 
$$h(\{x_s\}_{s\in S})=\{x_f\}_{f\in\Gamma},$$ 
where $x_f=\{x_{f(a)}\}_{a\in\dom(f)}\in Y_f$ and $f\in\prod\{\Sigma_a:a\in\dom(f)\}$ and $\dom(f)\in[S]^\kappa.$ 
By the property $$\{x_s\}_{s\in S}=\{t_s\}_{s\in S}\Leftrightarrow\forall_{s\in S}\forall_{\sigma\in\Sigma_s}\; x_\sigma=t_\sigma\Leftrightarrow\forall_{f\in\Gamma}\; x_f=t_f,$$
 the map $h$ is well defined and  it is injection. 

The map $h$ is surjection. Indeed, let $\{b_f\}_{f\in\Gamma}\in Y.$
For each  ${s\in S}$ and each ${\sigma\in\Sigma_s}$ we fix  $f^s_\sigma\in \Gamma$ such that $ s\in\dom(f^s_\sigma) $  
and 
$ f^s_\sigma(s)=\sigma.$
Let $\pi_{f(s)}:Y_f\to X_{f(s)}$ be a projection for each $f\in\Gamma.$

For each $t\in S$ let define $b_t=\{b_\sigma\}_{\sigma\in\Sigma_t},$ where $b_\sigma=\pi_{f^t_\sigma(t)}(b_{f^t_\sigma}).$  We shall prove that an element $b_t$ is a 
thread of the space $X_t$. 
Indeed, if $\sigma\geq \rho$ and $\sigma,\rho\in\Sigma_t$, then take functions $f^t_\sigma$ and $g^t_\rho$. For abbreviation, denote $f=f^t_\sigma$ and $g=g^t_\rho.$ Define a 
function $h\colon \dom(f)\cup \dom(g)\to \bigcup\{\Sigma_t:t \in\dom(f)\cup \dom(g)\} $ in the following 
way:
\[h(s)=
\begin{cases}
g(s) &\text{ if }s\in\dom(g)\setminus\dom(f)\\
f(s) &\text{ if }s\in\dom(f).
\end{cases}\]
 The function $h$ is element of $\Gamma$ 
and 
$f,g\preceq h$. Note that $h|\dom(f)=f$ and $h|\dom(g)\setminus \{t\}=g|\dom(g)\setminus \{t\}.$
Since

\begin{align*} 
\left\{b_{g(s)}\right\}_{s\in \dom(g)}=b_{g}=p^h_{g}(b_h)=\left(\prod_{s\in\dom(g)} s^{h(s)}_{g(s)}\right)\left(\pi^{\dom(h)}_{\dom(g)}
(b_h)\right)=\\
\left(\prod_{s\in\dom(g)} s^{h(s)}_{g(s)}\right)
\left(\{b_{h(s)}\}_{s\in \dom(g)}\right)= 
\left\{s^{h(s)}_{g(s)}(b_{h(s)})\right\}_{s\in \dom(g)}\end{align*} 
we get
$$b_\rho=b_{g(t)}=s^{h(t)}_{g(t)}(b_{h(t)})=s^{f(t)}_{g(t)}(b_{f(t)})=s^\sigma_\rho(b_{\sigma}).$$
 It is clear that $h(\{a_t\}_{t\in S})=\{b_f\}_{f\in\Gamma}.$

We shall prove that the map $h$ is continuous. Take an open subset  $U=\prod_{s\in\dom(f)} A_{f(s)}\subseteq Y_f$  such that
\[
A_{f(s)}=
\begin{cases}
V,&\text{ if } s=s_0;\\
X_{f(s)},&\text{ otherwise, }
\end{cases}\]
where $V\subseteq X_{f(s_0)}$ is open subset. A map $p_f$ is projection from the inverse limit $Y$ to $Y_f$. It is sufficient to show that :
$$h^{-1}((p_f)^{-1}(U))=\prod_{s\in S}B_s$$
where 
\[
B_s=
\begin{cases}
W,&\text{ if } s=s_0;\\
X_s,&\text{ otherwise, }
\end{cases}\]
and $W=\pi_{f(s_0)}^{-1}(V)$ and $\pi_{f(s_0)}\colon Y_{f}\to X_{\sigma_0}$ is the projection 
and $f(s_0)=\sigma_0$.
We have 
\begin{align*}\{x_s\}_{s\in S}\in h^{-1}((p_f)^{-1}(U))\Leftrightarrow  p_f(h(\{x_s\}_{s\in S}))\in U \\ \Leftrightarrow p_f(\{x_f\}_{f\in 
\Gamma})=x_f\in U\Leftrightarrow x_{f(s_0)}\in V \\
\Leftrightarrow x_{s_0}\in W\Leftrightarrow x\in 
\prod_{s\in S}B_s\subseteq \prod_{s\in S}X_s=X\end{align*}

Since the map $h$ is bijection and  $$(p_f)^{-1}(U))=h(h^{-1}((p_f)^{-1}(U)))=h(\prod_{s\in S}B_s)$$ for 
any subbase subset $\prod_{s\in S}B_s\subseteq X$, the map $h$ is open. 
\end{proof}

In the case $\kappa=\omega$ we have well know results that I-favorable space is productable (see \cite{dkz} or \cite{kp7}) 

\begin{corollary}\label{prod}
Every I-favorable space is  stable under any product.
\end{corollary}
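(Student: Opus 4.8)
The plan is to derive Corollary~\ref{prod} as the special case $\kappa=\omega$ of Theorem~\ref{product}, after identifying the class $\mathcal C_\omega$ with the class of I-favorable spaces. First I would recall that ``I-favorable'' is exactly ``$\omega$-favorable'' in the terminology of Section~2, i.e.\ Player~I has a winning strategy in the open-open game of length $\omega$ (the original Daniels--Kunen--Zhou game). So the statement to prove is: if $X_s$ is I-favorable for every $s\in S$, then $\prod_{s\in S}X_s$ is I-favorable.

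The key step is the equivalence $X$ is I-favorable $\iff$ $X\in\mathcal C_\omega$. One direction is immediate from Theorem~\ref{muu}: if $X\in\mathcal C_\omega$, then $X$ is (a dense subset of) the inverse limit of an $\omega$-complete system $\{X_\sigma,\pi^\sigma_\varrho,\Sigma\}$ with skeletal bonding maps and $\w(X_\sigma)\le\omega$; since $\pi(X_\sigma)\le\w(X_\sigma)\le\omega$, Proposition~\ref{prop1} gives $\mu(X_\sigma)=\omega$ for each $\sigma$, so $\kappa=\sup\{\mu(X_\sigma):\sigma\in\Sigma\}=\omega$ and Theorem~\ref{muu} yields $\mu(X)=\omega$, that is, $X$ is I-favorable. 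The converse --- every I-favorable space lies in $\mathcal C_\omega$ --- is the representation theorem of \cite{kp7} (equivalently \cite{dkz}): an I-favorable space is a dense subset of the limit of an $\omega$-complete inverse system of second-countable $T_0$ spaces with skeletal bonding maps. I would cite this directly rather than reprove it.

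Granting the equivalence, the proof of the corollary is short. Let $\{X_s:s\in S\}$ be a family of I-favorable spaces; then each $X_s\in\mathcal C_\omega$ by the equivalence, so by Theorem~\ref{product} the product $\prod_{s\in S}X_s$ belongs to $\mathcal C_\omega$, and hence is I-favorable again by the equivalence. I would phrase this as: ``For $\kappa=\omega$, the class $\mathcal C_\omega$ coincides with the class of I-favorable spaces, by Proposition~\ref{prop1} together with Theorem~\ref{muu} (for the inclusion $\mathcal C_\omega\subseteq\{\text{I-favorable}\}$) and the representation theorem from \cite{kp7} or \cite{dkz} (for the reverse inclusion). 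Thus Theorem~\ref{product} applied with $\kappa=\omega$ gives the claim.''

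The main obstacle, such as it is, is purely expository: one must be careful that the notion of skeletal map used in \cite{kp7}/\cite{dkz} and the weight/$T_0$ hypotheses packaged into $\mathcal C_\omega$ match the representation theorem precisely, and that Theorem~\ref{muu} is being applied to a system all of whose factors genuinely satisfy $\mu(X_\sigma)=\omega$ (which needs $\w(X_\sigma)\le\omega\Rightarrow\pi(X_\sigma)\le\omega\Rightarrow\mu(X_\sigma)\le\omega$, and $\mu(X_\sigma)\ge\omega$ since $\mu$ is by definition an infinite cardinal). No new combinatorial work is required beyond Theorem~\ref{product}; the corollary is essentially a dictionary translation of that theorem into the language of the open-open game.
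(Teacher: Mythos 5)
Your proposal is correct and follows essentially the same route as the paper, which offers no separate argument for this corollary beyond the remark that it is the $\kappa=\omega$ case of Theorem~\ref{product} combined with the known identification (from \cite{dkz}, \cite{kp7}, \cite{kp8}) of I-favorable spaces with (dense subspaces of) members of $\mathcal C_\omega$. The only point worth tightening is that the representation theorem places an I-favorable space only \emph{densely} inside a member of $\mathcal C_\omega$, so one should invoke Lemma~\ref{nn} (and the fact that a product of dense subspaces is dense in the product) to transfer favorability back and forth, which your sketch implicitly does.
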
 

If $D$ is a set and $\kappa$ is cardinal number then   we denote  $\bigcup_{\alpha<\kappa} D^\alpha$ by $D^{<\kappa}$.

The following result probably is known but we give a proof for the sake of completnes.

\begin{theorem}\label{mu2}
Let $\kappa$ be an infinite cardinal and let $T$ be a set such that $|T|\geq \kappa^\kappa$.  If $A\in[T]^\kappa$ and $f_\delta\colon T^{<\kappa}\to T$  for all $\delta<\kappa^{<\kappa}$ then there exists a set $B\subseteq T$ such that $|B|\leq \tau$ and $A\subseteq B$ and $f_\delta(C)\in B$ for every $C\in B^{<\kappa}$ and every $\delta<\kappa^{<\kappa}$, where  $$\tau=
\begin{cases}
\kappa^{<\kappa} &\text{ for regular }\kappa\\
\kappa^\kappa & \text{ otherwise.} 
\end{cases}$$ 
\end{theorem}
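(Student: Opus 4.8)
The plan is to build $B$ as an increasing union of $\tau$-sized approximations, closing off under all the functions $f_\delta$ step by step, using a standard Löwenheim–Skolem–style closure argument. The only delicate point is bookkeeping the cardinal arithmetic so that each stage stays of size $\le\tau$ and the whole union does too, which is where the case distinction between regular and singular $\kappa$ enters.

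First I would set $\lambda=\kappa^{<\kappa}$ (the number of functions) and observe the basic fact that if $|C|\le\tau$ then $|C^{<\kappa}|\le\tau$: for regular $\kappa$ this is $(\kappa^{<\kappa})^{<\kappa}=\kappa^{<\kappa}=\tau$ using $\nu^{<\kappa}=\sup_{\theta<\kappa}\nu^\theta$ together with regularity, and for singular $\kappa$ it is $(\kappa^\kappa)^{<\kappa}=\kappa^\kappa=\tau$ outright. Next I would set $B_0=A$, which has size $\kappa\le\tau$. Given $B_n$ with $|B_n|\le\tau$, let $B_{n+1}=B_n\cup\{f_\delta(C): \delta<\lambda,\ C\in (B_n)^{<\kappa}\}$. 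The number of new points added is at most $\lambda\cdot|(B_n)^{<\kappa}|\le \lambda\cdot\tau$; since $\lambda=\kappa^{<\kappa}\le\tau$ in both cases, this is $\le\tau$, so $|B_{n+1}|\le\tau$. Finally put $B=\bigcup_{n<\omega}B_n$; then $|B|\le\omega\cdot\tau=\tau$ and $A=B_0\subseteq B$.

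It remains to check closure: given $\delta<\lambda$ and $C\in B^{<\kappa}$, I must find $n$ with $C\in (B_n)^{<\kappa}$, so that $f_\delta(C)\in B_{n+1}\subseteq B$. This is immediate when $\kappa$ is regular and $|C|<\kappa$, since $C$ is a set of size $<\kappa$ whose elements each lie in some $B_{n_i}$, and by regularity $\kappa>\omega$, so in fact one should be slightly careful: $C$ may be infinite, but $C$ as a sequence has domain some ordinal $\alpha<\kappa$, and its range meets the $B_n$'s cofinally only if $\operatorname{cf}(\alpha)=\omega$ were possible — but a sequence $C:\alpha\to B=\bigcup_n B_n$ with $\alpha<\kappa$; since each $C(i)\in B_{n_i}$ for some $n_i<\omega$, and there are $|\alpha|<\kappa$ many such indices but only $\omega$ possible values, the range of $i\mapsto n_i$ is bounded in $\omega$ exactly when $\operatorname{sup}_i n_i<\omega$. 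For regular $\kappa>\omega$ the map $i\mapsto n_i$ from $\alpha$ (with $|\alpha|<\kappa$) into $\omega<\kappa$ need not be bounded in general, so the $\omega$-chain argument needs replacing by a chain of length $\operatorname{cf}$ suited to absorbing $\kappa$-sequences — the cleanest fix is to iterate $\kappa^+$ times (or $\kappa$ times when $\kappa$ is regular, taking unions at limits of cofinality $\kappa$ carefully) rather than $\omega$ times.

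So the corrected recipe is: build $(B_\eta)_{\eta<\kappa^+}$ with $B_0=A$, $B_{\eta+1}=B_\eta\cup\{f_\delta(C):\delta<\lambda, C\in(B_\eta)^{<\kappa}\}$, and $B_\eta=\bigcup_{\xi<\eta}B_\xi$ at limits; each $|B_\eta|\le\tau$ by the arithmetic above together with $|\eta|\le\kappa\le\tau$; then by regularity of $\kappa^+$ the chain stabilizes under $\kappa$-sequences, so $B=\bigcup_{\eta<\kappa^+}B_\eta$ — but this has size $\kappa^+\cdot\tau$, which is still $\tau$ only if $\kappa^+\le\tau$, true since $\kappa^+\le 2^\kappa\le\kappa^\kappa$ for singular $\kappa$ and $\kappa^+\le\kappa^{<\kappa}$ is \emph{not} generally true for regular $\kappa$. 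Hence for regular $\kappa$ one should instead iterate exactly $\kappa$ times, $B=\bigcup_{\eta<\kappa}B_\eta$: then $|B|\le\kappa\cdot\tau=\tau$, and any $C\in B^{<\kappa}$ has domain $\alpha<\kappa$ with $C(i)\in B_{\eta_i}$, $\eta_i<\kappa$; since $|\alpha|<\kappa=\operatorname{cf}(\kappa)$, the supremum $\eta^*=\sup_i\eta_i<\kappa$, so $C\in(B_{\eta^*})^{<\kappa}$ and $f_\delta(C)\in B_{\eta^*+1}\subseteq B$. For singular $\kappa$ the $\omega$-iteration already suffices once we note that a $\kappa$-sequence into $\bigcup_n B_n$ need only be absorbed if it lands in some single $B_n$ — which fails — so there too one iterates $\kappa^+$ times and uses $\kappa^+\le 2^\kappa\le\kappa^\kappa=\tau$. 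The main obstacle, then, is precisely this matching of the iteration length to $\operatorname{cf}(\kappa)$ so as to simultaneously guarantee closure under $\kappa$-sequences and keep the final size at $\tau$; once the lengths are chosen correctly ($\kappa$ for regular $\kappa$, $\kappa^+$ for singular $\kappa$), everything else is the routine closure verification sketched above.
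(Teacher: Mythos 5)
Your final construction is correct and is essentially the paper's own argument: a transfinite closure iteration of length $\kappa$ for regular $\kappa$ (using $\cf(\kappa)=\kappa$ to absorb every $<\kappa$-sequence into some stage) and of length $\kappa^+$ for singular $\kappa$ (using $\cf(\kappa^+)>\kappa$ and $\kappa^+\le 2^\kappa\le\kappa^\kappa=\tau$), with the same cardinal arithmetic $\tau^{<\kappa}=\tau$ and $\kappa^{<\kappa}\le\tau$ keeping every stage of size at most $\tau$. The only differences from the paper are cosmetic (the paper grades stage $\beta$ by closing only under $\delta<\kappa^{|\beta|}$ and sequences of length $<\beta$, while you close under everything at each stage), plus your initial, correctly self-diagnosed false start with an $\omega$-length iteration.
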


\begin{proof}
Assume that $\kappa$ is regular cardinal.
Let $A\in[T]^\kappa$ and let $f_\delta\colon\bigcup_{\alpha<\kappa} T^\alpha\to T$ for $\delta<\kappa^{<\kappa}.$
Let $A_0=A$. Assume that we have defined $A_\alpha$ for $\alpha<\beta$ such that $|A_\alpha|\leq \kappa^{|\alpha|}$. Put
$$A_\beta=(\bigcup_{\alpha<\beta} A_\alpha)\cup\{f_\delta(C):C\in (\bigcup_{\alpha<\beta} A_\alpha)^{< \beta}\text{ and }\delta<\kappa^{|\beta|}\}.$$ 
Calculate the size of the set $A_\beta$:

$$|A_\beta|\leq |(\bigcup_{\alpha<\beta} A_\alpha)||\kappa^{|\beta|}||(\bigcup_{\alpha<\beta} A_\alpha)^{
<\beta}|\leq \kappa^{|\beta|}|(\kappa^{|\beta|})^{|\beta|}|
\leq \kappa^{|\beta|}.$$

Let $B=\bigcup_{\beta<\kappa} A_\beta$, so we get $|B|\leq\kappa^{<\kappa}$. 
 Fix  a sequence $\left\langle b_\alpha:\alpha<\beta\right\rangle\subseteq B$  and $f_\gamma$. Since $\cf(\kappa)=\kappa$  there exists $\delta<\kappa$ such that $C= \{b_\alpha:\alpha<\beta\}\subseteq A_{\delta}$ and $f_\gamma(C)\in A_{\sigma+1}$ for some $\sigma<\kappa$.

In the second case  $\cf(\kappa)<\kappa$,  we proceed the above induction up to $\beta=\kappa$. Let
$B=A_\kappa$, so  we get $|B|\leq\kappa^\kappa$ and $B=\bigcup_{\beta<\kappa^+} A_\beta$. 
Similarly to the first case we get that $B$ is closed under all function $f_\delta$, $\delta<\kappa^{<\kappa}$.

\end{proof}

\begin{theorem}\label{mu1}
If $X$ belongs to the class $\mathcal{C}_\kappa$ then $\su(X)\leq\kappa$ .
\end{theorem}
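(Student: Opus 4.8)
The plan is to exhibit, for a space $X\in\mathcal{C}_\kappa$, a winning strategy for Player~I in the open-open game of length $\kappa$, i.e.\ to show $X$ is $\kappa$-favorable, and then invoke Proposition~\ref{muu1}, which gives $\su(X)\leq\mu(X)\leq\kappa$. Since $X$ is (a dense subset of) the inverse limit of a $\kappa$-complete system $\{X_\sigma,\pi^\sigma_\varrho,\Sigma\}$ with $\w(X_\sigma)\leq\kappa$ and skeletal bonding maps, and each factor $X_\sigma$ has a $\pi$-base of size at most $\kappa$, Proposition~\ref{prop1} tells us every $X_\sigma$ is $\kappa$-favorable; in fact $\mu(X_\sigma)\le\pi(X_\sigma)\le\w(X_\sigma)\le\kappa$. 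So $\kappa\ge\sup\{\mu(X_\sigma):\sigma\in\Sigma\}$, and Theorem~\ref{muu} applies directly to give $\mu(X)=\sup\{\mu(X_\sigma):\sigma\in\Sigma\}\leq\kappa$. Combined with $\su(X)\leq\mu(X)$ from Proposition~\ref{muu1}, this yields $\su(X)\leq\kappa$.

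First I would make precise the reduction: by definition of $\mathcal{C}_\kappa$, write $X$ (up to passing to a dense subset, which by Lemma~\ref{nn} does not affect $\kappa$-favorability and hence does not affect $\mu$) as $\varprojlim\{X_\sigma,\pi^\sigma_\varrho,\Sigma\}$ with the stated properties. Second, for each $\sigma$ observe $\pi(X_\sigma)\leq\w(X_\sigma)\leq\kappa$, so Proposition~\ref{prop1} gives that $X_\sigma$ is $\pi(X_\sigma)$-favorable, hence $\kappa$-favorable, hence $\mu(X_\sigma)\leq\kappa$. Third, set $\kappa'=\sup\{\mu(X_\sigma):\sigma\in\Sigma\}\leq\kappa$ and apply Theorem~\ref{muu} to the same $\kappa'$-complete system (a $\kappa$-complete system is a fortiori $\kappa'$-complete) to conclude $\mu(X)=\kappa'\leq\kappa$. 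Finally, Proposition~\ref{muu1} gives $\su(X)\leq\mu(X)\leq\kappa$.

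The only point requiring a small amount of care is whether Theorem~\ref{muu} can be invoked when the system is $\kappa$-complete rather than exactly $\kappa'$-complete: one should check that a $\kappa$-complete inverse system with $\kappa'\le\kappa$ is also $\kappa'$-complete, which is immediate since every chain of length $\le\kappa'$ is a chain of length $\le\kappa$, so it has a least upper bound in $\Sigma$ and the inverse-limit condition at suprema of such chains holds. I do not expect any genuine obstacle here; the work has effectively all been done in Theorem~\ref{muu}, and this theorem is the expected corollary that packages the cellularity bound. (If one wanted to avoid even citing Theorem~\ref{muu}, one could instead directly bound cellularity: a family of pairwise disjoint open sets may be assumed to consist of sets of the form $\pi_\sigma^{-1}(U)$, and $\kappa$-completeness forces such a family of size $\kappa$ to be concentrated on a single $X_\beta$, whose cellularity is at most $\w(X_\beta)\le\kappa$ — but the route through $\mu$ is cleaner.)
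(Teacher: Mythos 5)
Your proposal is correct and follows essentially the same route as the paper, whose entire proof is the one-line appeal to Theorem~\ref{muu} and Proposition~\ref{muu1}; you merely spell out the intermediate step $\mu(X_\sigma)\le\pi(X_\sigma)\le\w(X_\sigma)\le\kappa$ and the (harmless) mismatch between $\kappa$-completeness and $\kappa'$-completeness, which the paper leaves implicit.
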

\begin{proof}
If $X\in\mathcal{C}_\kappa$ then by Theorems \ref{muu} and \ref{muu1} we get  $\su(X)\leq\mu(X)\leq\kappa$.
\end{proof}

We apply some facts from the paper \cite{kp8}. Let $\Pee$ be a family of open subset of topological space $X$ and $x,y\in X$. We say that $x\sim_{\Pee} y$ if and only if $x\in V\Leftrightarrow y\in V$ for every $V\in\Pee.$ The family of all sets $[x]_{\Pee}=\{y:y\sim_{\Pee}x\}$ we denote by $X/\Pee$. Define a map 
$q:X\to X/\Pee$ as follows $q[x]=[x]_{\Pee}$. The set $X/\Pee$ is equipped with topology $\Tee_{\Pee}$ generated by 
all images $q[V]$ where $V\in\Pee$. 

Recall Lemma 1 from paper \cite{kp8}:
\textit{
If $\Pee$ is a family of open set of $X$ and $\Pee$ is closed under finite intersection then 
the mapping $q:X\to X/\Pee$ is continuous. Moreover if $X=\bigcup \Pee$ then the family 
$\{q[V]: V\in \Pee\}$ is a base for the topology $\Tee_{\Pee}$.}

Notice that if $\Pee$ has a property
\begin{align*}
 \tag{seq}\forall(W\in\Pee)\exists(\{V_n:n<\omega\}\subseteq\Pee)\exists(\{U_n:n<\omega\}\subseteq\Pee)\\
W=\bigcup_{n<\omega}\;U_n \text{ and }\forall(n<\omega)\;U_n\subseteq X\setminus V_n\subseteq U_{n+1}
\end{align*}
then $\bigcup\Pee=X$ and by \cite[Lemma 3]{kp8} the topology $\Tee_{\Pee}$ is Hausdorff. Moreover if
$\Pee$ is closed under finite intersection then by \cite[Lemma 4]{kp8} the topology $\Tee_{\Pee}$ is regular.
Theorem 5  and Lemma 9 \cite{kp8} yeield.

\begin{theorem}\label{closed}
If $\Pee$ is a set of open subset of topological space $X$ such that:
\begin{enumerate}
	\item is closed under $\kappa$-winning strategy, finite union and finite intersection,
	\item has property (seq),
\end{enumerate}
then $X/\Pee$ with topology $\Tee_{\Pee}$ is completely regular space and $q:X\to X/\Pee$ is skeletal. \hfill $\Box$
\end{theorem}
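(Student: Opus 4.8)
The plan is to treat the two conclusions --- complete regularity of $(X/\Pee,\Tee_\Pee)$ and skeletality of $q$ --- separately, extracting each from the apparatus already in place; in the paper this is exactly what the cited \cite[Theorem 5 and Lemma 9]{kp8} supply. First I would record the elementary facts used throughout: every $V\in\Pee$ is $\sim_\Pee$-saturated, so for $V,V'\in\Pee$ one has $q[V']\subseteq q[V]$ if and only if $V'\subseteq V$, and $q[X\setminus V]=(X/\Pee)\setminus q[V]$; property (seq) forces $\bigcup\Pee=X$, whence by \cite[Lemma 1]{kp8} the family $\{q[V]:V\in\Pee\}$ is a base for $\Tee_\Pee$ (and $q$ is continuous); and, as recorded just before the statement, \cite[Lemmas 3 and 4]{kp8} already give that $\Tee_\Pee$ is Hausdorff and regular.

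To pass from regularity to complete regularity I would show that the base $\{q[V]:V\in\Pee\}$ consists of cozero-sets, which is enough, since a space whose topology has a base of cozero-sets is completely regular. Fix $W\in\Pee$ and apply (seq): there are $U_n,V_n\in\Pee$ with $W=\bigcup_n U_n$ and $U_n\subseteq X\setminus V_n\subseteq U_{n+1}$, so that in the quotient $q[U_n]\subseteq(X/\Pee)\setminus q[V_n]\subseteq q[U_{n+1}]$, hence $\cl q[U_n]\subseteq q[U_{n+1}]$ and $q[W]=\bigcup_n q[U_n]$. From this interleaved sequence one wants to manufacture a family $\{G(r):r\in D\}$ of open sets, $D$ a countable dense subset of $(0,1)$, increasing in $r$ with $\cl G(r)\subseteq G(s)$ for $r<s$ and $\bigcup_{r\in D}G(r)=q[W]$; then $g(y)=\sup\{r\in D:y\in G(r)\}$ (with $\sup\emptyset=0$) is continuous and $\{y:g(y)\neq 0\}=q[W]$, so $q[W]$ is a cozero-set. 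I expect the production of this dense family to be the main obstacle. One application of (seq) yields only a countable increasing sequence with nested closures, not a dense one, and filling the levels that lie strictly between two consecutive $q[U_n]$ requires inserting, between a basic open set and a slightly larger one containing its closure, a further open set with closure still inside; this ``shrinking'' is a normality-type step, and it is here that (seq) together with closure of $\Pee$ under finite unions and intersections must be used in full force (iterating (seq), or equivalently first establishing that $X/\Pee$ is normal). An equivalent packaging is to verify that $\{(X/\Pee)\setminus q[V]:V\in\Pee\}$ is a normal base in Frink's sense and invoke Frink's theorem; the separation condition demanded of a normal base is supplied by the same ingredients.

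For skeletality of $q$, let ${\textit{\textbf s}}$ be the $\kappa$-winning strategy under which $\Pee$ is closed, so that ${\textit{\textbf s}}(f)\in\Pee$ for every sequence $f$ of members of $\Pee$. Suppose, toward a contradiction, that some non-empty open $U\subseteq X$ has $q[U]$ nowhere dense in $X/\Pee$. Then for every $V\in\Pee$ the set $q[V]\setminus\cl q[U]$ is a non-empty open subset of $X/\Pee$, hence contains some $q[V']$ with $V'\in\Pee$; since $V',V\in\Pee$ this forces $V'\subseteq V$, and since $q[V']\cap q[U]=\emptyset$ it forces $V'\cap U=\emptyset$. Now let Player I follow ${\textit{\textbf s}}$: at each round the set ${\textit{\textbf s}}$ prescribes belongs to $\Pee$ (its argument is a sequence of members of $\Pee$ already chosen), so Player II may respond with some $B_{\alpha+1}\in\Pee$ contained in it and disjoint from $U$. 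Then every $B_{\alpha+1}$ misses $U$, so $\bigcup_{\alpha<\kappa}B_{\alpha+1}\subseteq X\setminus U$ is not dense in $X$, contradicting that ${\textit{\textbf s}}$ is winning. Hence $\int\cl q[U]\neq\emptyset$ for every non-empty open $U\subseteq X$, that is, $q$ is skeletal; together with the preceding paragraph this proves the theorem.
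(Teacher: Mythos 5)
The paper itself gives no internal argument for this theorem: the ``proof'' is the single sentence preceding the statement, deferring everything to \cite[Theorem 5 and Lemma 9]{kp8}. So the comparison is necessarily against what those results must supply. Your skeletality argument is correct and complete, and it is the expected one: if $\cl q[U]$ had empty interior, then below every $\textit{\textbf{s}}$-move (which lies in $\Pee$ by hypothesis (1)) Player II could choose, using that $\{q[V]:V\in\Pee\}$ is a base and that members of $\Pee$ are $\sim_\Pee$-saturated, a member of $\Pee$ disjoint from $U$; the resulting union avoids $U$, contradicting that $\textit{\textbf{s}}$ is winning. The preliminary facts you record (saturation, $q[X\setminus V]=(X/\Pee)\setminus q[V]$, the base property from \cite[Lemma 1]{kp8}) are all right.

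The complete regularity half, however, contains a genuine gap which you yourself flag but do not close. One application of (seq) gives $q[W]=\bigcup_n q[U_n]$ with $\cl q[U_n]\subseteq(X/\Pee)\setminus q[V_n]\subseteq q[U_{n+1}]$, i.e.\ an $\omega$-indexed increasing chain with nested closures; but to conclude that $q[W]$ is a cozero-set you must \emph{completely separate} the closed set $(X/\Pee)\setminus q[V_n]$ from $(X/\Pee)\setminus q[U_{n+1}]$ for each $n$, and neither regularity nor your proposed ``iteration of (seq)'' visibly does this: applying (seq) to $U_{n+1}$ only writes $U_{n+1}$ as an increasing union $\bigcup_m U_{n+1,m}$ from the inside, and without any compactness there is no single $m$ with $X\setminus V_n\subseteq U_{n+1,m}$, so the dyadic interpolation never gets started. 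Appealing instead to normality of $X/\Pee$ is circular, since nothing in the hypotheses yields normality. Note that in the case actually treated in \cite{kp8} the family $\Pee$ is countable, so $X/\Pee$ is a second countable regular $T_0$ space and complete regularity is free from metrizability --- a route unavailable at uncountable $\kappa$. In the one place the present paper uses the theorem (Theorem \ref{mu3}), the sets $U_n,V_n$ arise as $f_W^{-1}((\tfrac1n,1])$ and $f_W^{-1}([0,\tfrac1n))$ for a fixed continuous $f_W$, and that extra structure is what one would want to exploit; your proof, working only from the abstract hypotheses (1)--(2), does not reach the conclusion. You should either supply the insertion argument (a Frink normal-base verification, as you suggest, but actually carried out) or restrict to the situation where the witnessing functions are available.
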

 
 If a topological space $X$ has the cardinal number $\mu(X)=\omega$ then $X\in \Cee_\omega$, but 
for $\mu(X)$ equals for instance $\omega_1$ we do not even know if $X\in\Cee_{{\omega_1}^\omega}.$ 
 
\begin{theorem}\label{mu3}Each Tichonov  space $X$  with $\mu(X)=\kappa$ can be dense embedded into  inverse limit of a system $\{ X_\sigma , \pi^\sigma_\varrho, \Sigma\},$ where all bonding map are skeletal,
indexing set $\Sigma$ is $\tau$-complete each $X_\sigma$ is Tichonov space with $\w(X_\sigma)\leq\tau$     and
$$\tau=
\begin{cases}
\kappa^{<\kappa} &\text{ for regular }\kappa\\
\kappa^\kappa & \text{ otherwise. }
\end{cases}$$
\end{theorem}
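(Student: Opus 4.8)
The plan is to realize $X$ as a dense subspace of the inverse limit of the quotient spaces $X/\Pee$ supplied by Theorem~\ref{closed}, taken over a suitably rich, $\tau$-closed family $\Sigma$ of subfamilies $\Pee$ of the cozero sets of $X$. First fix a $\kappa$-winning strategy for Player~I; replacing each of Player~I's sets by a nonempty cozero subset of it still yields a $\kappa$-winning strategy ${\textit{\textbf s}}$ (passing to smaller sets only restricts Player~II, so any ${\textit{\textbf s}}$-play is a legal play for the original strategy), so we may assume that all values of ${\textit{\textbf s}}$ are nonempty cozero sets. Let $\mathcal Z$ be the family of nonempty cozero subsets of $X$; it is a base for $X$, it is closed under finite unions and finite intersections, and from a continuous function representing a given $W\in\mathcal Z$ one reads off countably many members of $\mathcal Z$ witnessing property~(seq) for $W$. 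Fix, by the axiom of choice, such witnessing operations on $\mathcal Z$. For $\Pee_0\subseteq\mathcal Z$ let $\overline{\Pee_0}$ be the least $\Pee$ with $\Pee_0\subseteq\Pee\subseteq\mathcal Z$ which is closed under ${\textit{\textbf s}}$ (that is, ${\textit{\textbf s}}[\Pee^{<\kappa}]\subseteq\Pee$), under finite unions and intersections, and under the chosen (seq)-witnessing operations; then $\overline{\,\cdot\,}$ is a closure operator. Since only countably many operations are involved, hence at most $\kappa^{<\kappa}$ of them, the argument of Theorem~\ref{mu2} gives $|\overline{\Pee_0}|\le\tau$ whenever $|\Pee_0|\le\tau$. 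Put
$$\Sigma=\{\,\Pee\subseteq\mathcal Z:\ \emptyset\ne\Pee=\overline{\Pee}\ \text{and}\ |\Pee|\le\tau\,\},$$
ordered by inclusion. (If $|\mathcal Z|<\kappa$ then $\w(X)<\kappa\le\tau$ and the one-term system $X_\sigma=X$ already proves the theorem, so assume $|\mathcal Z|\ge\kappa$.)

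Because $\overline{\,\cdot\,}$ is a closure operator, $\Sigma$ is upward directed, via $\overline{\Pee\cup\Qee}$, and any chain $A\subseteq\Sigma$ of length $\le\tau$ has least upper bound $\overline{\bigcup A}$: this family has cardinality $\le\tau$, it is an upper bound of $A$, and it is contained in every upper bound of $A$ in $\Sigma$, since such a bound contains $\bigcup A$ and is closed. Thus $\Sigma$ is $\tau$-complete. For $\Pee\in\Sigma$ the hypotheses of Theorem~\ref{closed} hold, so $X_\Pee:=(X/\Pee,\Tee_\Pee)$ is completely regular and $q_\Pee\colon X\to X_\Pee$ is skeletal; by \cite[Lemma~1]{kp8} (note $X=\bigcup\Pee$, by (seq)) the sets $q_\Pee[V]$, $V\in\Pee$, form a base of $X_\Pee$, so $\w(X_\Pee)\le|\Pee|\le\tau$, and since $X_\Pee$ is $T_0$ by construction it is Tychonoff. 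If $\Pee\subseteq\Qee$ in $\Sigma$ then $\sim_\Qee$ refines $\sim_\Pee$, so $[x]_\Qee\mapsto[x]_\Pee$ defines a continuous surjection $\pi^\Qee_\Pee\colon X_\Qee\to X_\Pee$ with $q_\Pee=\pi^\Qee_\Pee\circ q_\Qee$; as $q_\Pee$ is skeletal and $q_\Qee$ is a continuous surjection, $\pi^\Qee_\Pee$ is skeletal (if $g\circ h$ is skeletal and $h$ is a continuous surjection, then for nonempty open $U$ the set $h^{-1}(U)$ is nonempty open and $g[U]=(g\circ h)\bigl[h^{-1}(U)\bigr]$, hence $\int \cl g[U]\ne\emptyset$). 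So $\{X_\Pee,\pi^\Qee_\Pee,\Sigma\}$ is an inverse system over the $\tau$-complete directed set $\Sigma$, with skeletal bonding maps and Tychonoff factors of weight $\le\tau$.

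It remains to embed $X$ densely into $Y:=\varprojlim\{X_\Pee,\pi^\Qee_\Pee,\Sigma\}$. Let $e(x)=\bigl([x]_\Pee\bigr)_{\Pee\in\Sigma}$; this is a thread, and $p_\Pee\circ e=q_\Pee$ is continuous for each $\Pee$, so $e\colon X\to Y$ is continuous. Every $V\in\mathcal Z$ lies in some member of $\Sigma$ (close off a $\kappa$-sized subfamily of $\mathcal Z$ containing $V$). As $\mathcal Z$ separates the points of the Tychonoff space $X$, $e$ is injective; and since $q_\Pee^{-1}(q_\Pee[V])=V$ for $V\in\Pee$ (because $V$ is $\sim_\Pee$-saturated), we get $e[V]=e[X]\cap p_\Pee^{-1}(q_\Pee[V])$, so $e$ carries the base $\mathcal Z$ of $X$ to relatively open sets and is therefore an embedding. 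Finally, every nonempty basic open subset of $Y$ contains a set $p_\Pee^{-1}(q_\Pee[W])$ with $\emptyset\ne W\in\Pee$, which meets $e[X]$ at $e(x)$ for any $x\in W$; hence $e[X]$ is dense in $Y$.

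The only genuinely delicate point is the cardinal bookkeeping in the first paragraph, where the argument of Theorem~\ref{mu2} forces $\tau$ to be $\kappa^{<\kappa}$ for regular $\kappa$ and $\kappa^\kappa$ otherwise; everything else is routine. Note that this construction does \emph{not} show that the system $\{X_\Pee,\pi^\Qee_\Pee,\Sigma\}$ is $\tau$-complete \emph{as an inverse system}: along a chain $A$ the closure $\overline{\bigcup A}$ can separate points that $\bigcup A$ does not, so one need not have $X_{\sup A}=\varprojlim\{X_\Pee:\Pee\in A\}$, and this is precisely the obstruction to concluding that $X\in\mathcal{C}_\tau$.
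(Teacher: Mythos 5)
Your proposal is correct and follows essentially the same route as the paper: close up families of cozero sets under the $\kappa$-winning strategy, finite unions and intersections, and the (seq)-witnessing operations via the cardinality bound of Theorem~\ref{mu2}, apply Theorem~\ref{closed} to obtain skeletal quotient maps $q_\Pee\colon X\to X/\Pee$, and embed $X$ densely into the inverse limit over the resulting $\tau$-complete index set. You merely supply details the paper defers to \cite{kp8} (the closure operator, skeletality of the bonding maps, the embedding and density checks), and your closing remark that only the indexing set, not the inverse system, is shown $\tau$-complete correctly identifies why this does not yield $X\in\Cee_\tau$.
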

\begin{proof}
Let $\Bee$ be a $\pi$-base for topological space $X$ consisting of cozerosets and $\sigma :\bigcup \{ \Bee^\alpha: \alpha<\kappa\} \to \Bee $ be a $\kappa$-winning strategy. We can define a function of finite intersection property and finite union property as following :
$g(\left\{ B_0,B_1,\ldots,B_n\right\})=B_0\cap B_1\cap\ldots\cap B_n$ and 
$h(\left\{ B_0,B_1,\ldots,B_n\right\})=B_0\cup B_1\cup\ldots\cup B_n$.
For each cozeroset $V\in\Bee$ fix a continuous function $f_V: X \to [0,1]$ such that $V= f_V^{-1}((0,1])$. Put  $\sigma_{2n} (V) = f_V^{-1}((\frac 1 n,1])$ and $\sigma_{2n+1} (V) = f_V^{-1}([0, \frac 1 n))$.
 By Theorem \ref{mu2} for each $\Raa\in[\Bee]^{\kappa}$  and all functions  $h, g, \sigma_n, \sigma$ there is
subset $\Pee\subseteq\Bee$ such that:
\begin{enumerate}
	\item $|\Pee|\leq\tau$ where 
	$$\tau=
\begin{cases}
\kappa^{<\kappa} &\text{ for regular }\kappa\\
\kappa^\kappa & \text{ otherwise, }
\end{cases}$$
\item $\Raa\subseteq\Pee$,
\item $\Pee$ is closed under $\kappa$-winning strategy $\sigma$, function of finite intersection property and finite union property ,
\item $\Pee$ is closed under $\sigma_n$ , $n<\omega$, hence $\Pee$ holds property (seq).
\end{enumerate}

Therefore by Theorem \ref{closed} we get skeletal mapping $q_{\Pee}:X\to X/\Pee .$ 
Let $\Sigma\subseteq [\Bee]^{\leq\tau}$ be a set of families which satisfies above condition
$(1), (2), (3),(4)$. If $\Sigma $ is directed by inclusion. It is easy to check that $\Sigma$ is $\tau$-complete. 
Similar to  \cite[Theorem 11]{kp8} we define a function $f:X\to Y$ as following $f(x) = \{f_\Pee(x)\},$
where $f(x)_\Pee=q_\Pee(x)$ and $ Y= \varprojlim \{ X/\Raa, q^\Raa_\Pee,\Cee\}$.
 If  $\Raa, \Pee \in \Cee$ and  $\Pee \subseteq \Raa$, then  $q^\Raa_\Pee (f(x)_\Raa)=f(x)_\Pee$. Thus  $f(x)$ is a thread, i.e. $f(x)\in Y$. It easy to see that $f$ is homeomorphism onto its image and $f[X]$ is dense in $Y$, compare \cite[proof of Theorem 11]{kp8}  
 
\end{proof}

 The Theorem \ref{mu3} suggests a question: 
 
 \textit{Does each space $X$ belong to $\Cee_{\mu(X)}$? }
 
  Fleissner \cite{fl} proved that there exists a space $Y$ such that $\su(Y)=\aleph_{0}$ and $\su(Y^3)=\aleph_{2}$. Hence  we get $\mu(Y)=\aleph_1$, by Theorem \ref{muu} and Corollary \ref{prod}. Suppose that $Y\in\Cee_{\mu(X)}$ then $\su(Y^3)\leq\aleph_1$, by Theorem \ref{mu1}, a contradiction.

\begin{corollary}If $X$ is topological space with $\mu(X)=\kappa$  then  
$\su(X^I)\leq\tau$  and
$$\tau=
\begin{cases}
\kappa^{<\kappa} &\text{ for regular }\kappa\\
\kappa^\kappa & \text{ otherwise. }
\end{cases}$$
\end{corollary}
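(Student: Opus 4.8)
The plan is to derive this corollary directly from Theorem \ref{mu3} together with Theorems \ref{product} and \ref{mu1}. First I would note that by Theorem \ref{mu3} the space $X$ (which we may take to be Tichonov, since cellularity and $\mu$ are unchanged by passing to a dense subspace via Lemma \ref{nn}, and any space has a Tichonov "reflection" with the same open-set lattice up to the relevant combinatorics --- alternatively simply assume $X$ Tichonov as in Theorem \ref{mu3}) embeds densely into the inverse limit of a $\tau$-complete system with skeletal bonding maps and with $\w(X_\sigma)\leq\tau$, where $\tau=\kappa^{<\kappa}$ for regular $\kappa$ and $\tau=\kappa^\kappa$ otherwise. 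That inverse limit is by definition a member of $\Cee_\tau$, and since cellularity is not changed by dense embeddings, $\su(X)\leq\su(\text{the inverse limit})$; but in fact we want to handle $X^I$, so I would instead observe that $X\in\Cee_\tau$ (identifying $X$ with its dense image, and noting $\Cee_\tau$ as defined contains all such dense subsets --- or, more safely, work with the inverse limit $\widetilde X\supseteq X$ dense and use $\su(X^I)=\su(\widetilde X^I)$ since $X^I$ is dense in $\widetilde X^I$).

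The key steps, in order, are: (1) replace $X$ by the Tichonov inverse limit $\widetilde X\in\Cee_\tau$ provided by Theorem \ref{mu3}, using that $X$ is dense in $\widetilde X$; (2) apply Theorem \ref{product} to conclude $\widetilde X^I\in\Cee_\tau$, since the Cartesian product of members of $\Cee_\tau$ again lies in $\Cee_\tau$; (3) apply Theorem \ref{mu1} to get $\su(\widetilde X^I)\leq\tau$; (4) finally use that $X^I$ is dense in $\widetilde X^I$ (a product of dense subspaces is dense in the product) so that $\su(X^I)=\su(\widetilde X^I)\leq\tau$. Each of these is a short invocation of an already-established result, so no lengthy computation is needed.

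The one point requiring care --- and the main obstacle --- is the interface between Theorem \ref{mu3} and the definition of $\Cee_\kappa$. Theorem \ref{mu3} produces an inverse limit whose factor spaces $X_\sigma$ are Tichonov with $\w(X_\sigma)\leq\tau$ and whose bonding maps are skeletal, over a $\tau$-complete indexing set; that is exactly the data defining membership in $\Cee_\tau$, \emph{provided} we also know the factors are $T_0$, which holds since Tichonov spaces are $T_0$. One must also check that $X$ being dense in $\widetilde X=\varprojlim\{X_\sigma,\pi^\sigma_\varrho,\Sigma\}$ really does let us conclude what we want: here we should be slightly careful whether $\Cee_\kappa$ is meant to contain dense subsets of such inverse limits or only the inverse limits themselves. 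The cleanest route, which avoids this ambiguity altogether, is to work with $\widetilde X$ directly: prove $\su(\widetilde X^I)\leq\tau$ via Theorems \ref{product} and \ref{mu1}, and then pass back down to $X^I$ by density. I would also remark in passing that when $\kappa=\omega$ this recovers Kurepa's bound $\su(X^I)\leq 2^{\aleph_0}$ for separable (indeed countably-favorable) $X$, tying back to the discussion in the introduction.

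\begin{proof}
We may assume $X$ is Tichonov; indeed, cellularity and $\mu$ depend only on the lattice of open sets, and in any case we only need an upper bound on $\su(X^I)$, which does not increase under passing to a denser or larger space. By Theorem \ref{mu3}, $X$ embeds densely into $\widetilde X=\varprojlim\{X_\sigma,\pi^\sigma_\varrho,\Sigma\}$, where $\Sigma$ is $\tau$-complete, each bonding map $\pi^\sigma_\varrho$ is skeletal, and each $X_\sigma$ is a Tichonov (hence $T_0$) space with $\w(X_\sigma)\leq\tau$. Thus $\widetilde X\in\Cee_\tau$. By Theorem \ref{product}, the product $\widetilde X^I$ again belongs to $\Cee_\tau$, and then Theorem \ref{mu1} gives $\su(\widetilde X^I)\leq\tau$. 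Since $X$ is dense in $\widetilde X$, the product $X^I$ is dense in $\widetilde X^I$, and cellularity is not changed by passing to a dense subspace; therefore $\su(X^I)=\su(\widetilde X^I)\leq\tau$.
\end{proof}
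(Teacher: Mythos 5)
Your proof is correct and follows essentially the same route as the paper: invoke Theorem \ref{mu3} to place $X$ (densely) inside a member of $\Cee_\tau$, apply Theorem \ref{product} to the power, and conclude with Theorem \ref{mu1}. You are in fact slightly more careful than the paper's two-line proof, which asserts $X^I\in\Cee_\tau$ outright, whereas you explicitly route the argument through the inverse limit $\widetilde X$ and the density of $X^I$ in $\widetilde X^I$.
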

\begin{proof}
By Theorem \ref{mu2} we get $X^I\in\mathcal{C}_\tau$. Hence by  Theorem \ref{mu1} and  \ref{product} we have $\su(X^I)\leq\tau$.
\end {proof}

By above Corollary we get the following

\begin{corollary}\cite[Kurepa]{ku62}
If $\{X_s:s\in S\}$ is a family of topological spaces and $\su(X_s)\leq\kappa$ for each $s\in S$, 
then $\su(\prod\{X_s: s\in S\})\leq 2^\kappa.$ \hfill $\Box$
\end{corollary}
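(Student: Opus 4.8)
The plan is to deduce this classical theorem of Kurepa as an immediate consequence of the preceding corollary together with Proposition~\ref{muu1}. First I would fix a family $\{X_s:s\in S\}$ with $\su(X_s)\leq\kappa$ for every $s\in S$. To apply the previous corollary I need a single ambient space; so I would pass to the product structure step by step, or rather observe that the previous corollary is stated for powers $X^I$ of a \emph{single} space, while here we have an arbitrary product. The natural fix is to embed each factor into a fixed large power: set $Z=\prod_{s\in S}X_s$ and note that each $X_s$ is (homeomorphic to a subspace of, but more usefully) a continuous open image under the projection $\pi_s\colon Z\to X_s$. Alternatively, and more cleanly, I would replace the given family by the single space $Y=\bigoplus_{s\in S}X_s$ (topological sum) or by any space in which all $X_s$ embed as dense-in-themselves pieces; but since cellularity is not generally monotone under products in a way that is controlled by a single power, the cleanest route is:

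\medskip

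First, by Proposition~\ref{muu1} applied to each factor, $\mu(X_s)\leq\sat(X_s)\leq\su(X_s)^+\leq\kappa^+$; so every factor is $\kappa^+$-favorable. Then I would invoke Corollary~\ref{prod} in its general form — the class of spaces with a given favorability bound is productive (this is exactly Theorem~\ref{product} specialized, or the cited productivity of I-favorable spaces generalized to length $\kappa^+$) — to conclude that $\prod\{X_s:s\in S\}$ is $\kappa^+$-favorable, i.e.\ $\mu(\prod\{X_s:s\in S\})\leq\kappa^+$. Finally I would apply the Corollary immediately preceding this one with the cardinal $\kappa^+$ in the role of "$\kappa$": since $\kappa^+$ is regular, that corollary gives $\su\bigl(\prod\{X_s:s\in S\}\bigr)\leq\tau$ with $\tau=(\kappa^+)^{<\kappa^+}=2^{\kappa}$, using the standard cardinal arithmetic identity $(\kappa^+)^{\kappa}=2^{\kappa}$ (valid since $2^\kappa\leq(\kappa^+)^\kappa\leq(2^\kappa)^\kappa=2^\kappa$) together with $(\kappa^+)^{<\kappa^+}=\sup_{\lambda\leq\kappa}(\kappa^+)^\lambda=(\kappa^+)^\kappa$.

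\medskip

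The main obstacle I anticipate is the mismatch between "arbitrary product $\prod X_s$" and "power $X^I$" in the statement of the corollary being cited: its proof goes through $X^I\in\mathcal C_\tau$, and $\mathcal C_\tau$ is a class of \emph{spaces}, so to feed in a heterogeneous product I must either (a) reprove that $\prod\{X_s:s\in S\}\in\mathcal C_{\kappa^+}$ directly — but this needs each $X_s\in\mathcal C_{\kappa^+}$, which by Theorem~\ref{mu3} requires $X_s$ to be Tychonoff, an assumption not present here — or (b) work purely at the level of the cardinal function $\mu$ and favorability, avoiding the class $\mathcal C_\kappa$ altogether. Route (b) is safer: $\mu(X_s)\leq\kappa^+$ by Proposition~\ref{muu1}, productivity of $(\kappa^+)$-favorability (the general form underlying Corollary~\ref{prod}, which does not need separation axioms since Lemma~\ref{nn} and the product construction in Theorem~\ref{muu}'s style are purely topological) gives $\mu(\prod X_s)\leq\kappa^+$, and then any application of the machinery bounding $\su$ by $\tau$ in terms of $\mu$ finishes the argument. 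I would therefore phrase the proof as: "$\mu(\prod\{X_s:s\in S\})\leq\su(X_s)^+\leq\kappa^+$ by Propositions~\ref{muu1} and the productivity from Corollary~\ref{prod}; now $\kappa^+$ is regular and $(\kappa^+)^{<\kappa^+}=2^\kappa$, so the previous Corollary yields $\su(\prod\{X_s:s\in S\})\leq 2^\kappa$." The only genuinely delicate point is confirming that the productivity statement extends verbatim from length $\omega$ to length $\kappa^+$ without hidden countability hypotheses — which it does, since the inverse-system argument of Theorem~\ref{muu} and Theorem~\ref{product} was already written for general $\kappa$.
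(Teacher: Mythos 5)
Your closing cardinal arithmetic is correct ($(\kappa^+)^{<\kappa^+}=(\kappa^+)^{\kappa}=2^\kappa$), but the argument breaks at its central step: the claim that $\kappa^+$-favorability is productive, i.e.\ that $\mu(X_s)\leq\kappa^+$ for all $s$ implies $\mu(\prod_{s\in S}X_s)\leq\kappa^+$. This is false, and the paper exhibits a counterexample in the paragraph immediately preceding the corollary you are proving: for Fleissner's space $Y$ one has $\su(Y)=\aleph_0$, hence $\mu(Y)\leq\sat(Y)\leq\aleph_1$ by Proposition~\ref{muu1}, yet $\su(Y^3)=\aleph_2$; since Proposition~\ref{muu1} also gives $\su(Y^3)\leq\mu(Y^3)$, necessarily $\mu(Y^3)\geq\aleph_2>\aleph_1$. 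So favorability of uncountable length is not preserved even by finite products, and Corollary~\ref{prod} (the $\omega$ case) does not extend ``verbatim'' as you assert --- the inverse-system machinery of Theorems~\ref{muu} and~\ref{product} proves productivity of membership in $\mathcal C_\kappa$, not of $\kappa$-favorability, and for uncountable $\kappa$ these are not the same thing. Note also that if your route (b) worked it would yield the stronger bound $\su(\prod_{s\in S}X_s)\leq\kappa^+$ directly from Proposition~\ref{muu1}, with no need for the preceding corollary at all; that stronger bound is exactly what the Fleissner example refutes.

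The derivation the paper intends runs through the class $\mathcal C_\tau$, and that detour is precisely where the exponential enters: from $\su(X_s)\leq\kappa$ one gets $\mu(X_s)\leq\sat(X_s)\leq\kappa^+$ (Proposition~\ref{muu1}); Theorem~\ref{mu3} then places each $X_s$ in $\mathcal C_\tau$ with $\tau=(\kappa^+)^{<\kappa^+}=2^\kappa$ --- not in $\mathcal C_{\kappa^+}$, as your route (a) suggests; Theorem~\ref{product} gives $\prod_{s\in S}X_s\in\mathcal C_{2^\kappa}$, and here the power-versus-heterogeneous-product mismatch you worry about is not an obstacle, since Theorem~\ref{product} is already stated for arbitrary products; finally Theorem~\ref{mu1} yields $\su(\prod_{s\in S}X_s)\leq 2^\kappa$. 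The passage from ``$\mu(X)\leq\kappa^+$'' to ``$X$ lies densely in an inverse limit of spaces of small weight,'' which costs $(\kappa^+)^{<\kappa^+}=2^\kappa$, cannot be bypassed and is the source of the $2^\kappa$ in Kurepa's bound. (Theorem~\ref{mu3} does assume the factors are Tychonoff, so the paper's own chain has a hypothesis gap for general topological spaces, but that is an issue with the paper, not one your argument repairs.)
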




\begin{thebibliography}{40}
\bibitem{bjz} B. Balcar, T. Jech and J. Zapletal, \textit{Semi-Cohen Boolean algebras}, Ann. of Pure and 
Appl. Logic 87 (1997), no. 3, 187 - 208.
\bibitem{bla} A. B\l aszczyk, 
\textit{Souslin number and inverse limits},  Topology and measure III, Proc. Conf., Vitte/Hiddensee 1980, 
Part 1, 21 -- 26 (1982). 
\bibitem{ch}
A.~Chigogidze, \textit{Inverse spectra}, North-Holland Mathematical
Library 53, North-Holland 1996.
\bibitem{dkz} P. Daniels, K. Kunen and H. Zhou, \textit{On the open-open game}, Fund. Math. 145 
(1994), no. 3, 205 - 220.
\bibitem{eng}
R.~Engelking, \textit{General topology}, Polish Scientific
Publishers, Warszawa (1977).
\bibitem{et} P. Erd\"os, A. Tarski \textit{On families of mutually exclusive sets}. Ann. of Math. 44 (1943), 315 - 329. 
\bibitem{fl} W.G. Fleissner, \textit{Some spaces related to topological inequalities proven by the Erd\"os-Rado theorem}, Proc. Amer. Math. Soc., 71 (1978), 313-320
\bibitem{jech} T. Jech, \textit{Set theory}, The Third Millennium Edition, Springer-Verlag (2002)
\bibitem{kp7} A. Kucharski and Sz. Plewik, \textit{Game approach to universally Kuratowski-Ulam spaces}, Topology Appl. 154 (2007), no. 2, 421 - 427.
\bibitem{kp8}  A. Kucharski and Sz. Plewik,  \textit{Inverse systems and $I$-favorable spaces},  Topology Appl. 156 (2008), no. 1, 110 -- 116.
\bibitem{kp9}
A.~Kucharski and Sz.~Plewik,  \textit{Skeletal maps and I-favorable
spaces},   Acta Univ. Carolin. Math. Phys. \textbf{51} (2010), 67--72.
\bibitem{ku62}  D. Kurepa,  \textit{The cartesian multiplication and the cellularity number},  Publ. Inst. Math. 2 (1962), 121 -- 139.
\bibitem{rm} J. Mioduszewski, L. Rudolf,
\textit{$H$-closed and extremally disconnected Hausdorff spaces.} 
Dissertationes Mathematicae 66 (1969). 
\bibitem{szy} A. Szyma\'nski, \textit{Some applications of tiny sequences},
Rendiconti del CircoloMatematico
di Palermo 3(1984), 321-329.
\end{thebibliography}
\end{document}